\newtheoremstyle{nonum}{}{}{\itshape}{}{\bfseries}{.}{ }{\thmnote{#3}}
\newtheorem{thm}{Theorem}[section]
\newtheorem*{thm*}{Theorem}
\newtheorem{cor}[thm]{Corollary}
\newtheorem{lem}[thm]{Lemma}
\newtheorem{prop}[thm]{Proposition}
\newtheorem{definition}[thm]{Definition}
\newtheorem*{definition*}{Definition}
\newenvironment{manualtheorem}[1]{%
	\manualtheoreminner
}{\endmanualtheoreminner}
\newenvironment{manualdefinition}[1]{%
	\manualdefinitioninner
}{\endmanualtheoreminner}
\newcommand{\innerthmname}{}%
\theoremstyle{definition}
\theoremstyle{nonum}
\theoremstyle{remark}
\newtheorem*{rems*}{Remarks}
\newtheorem{rem}[thm]{Remark}
\newcommand{\R}{\mathbb R}
\newcommand{\RR}{\mathbb R}
\newcommand{\N}{\mathbb N}
\def\L{{\mathcal L}}
\newcommand{\iprod}[2]{\langle #1,#2 \rangle} %
\def\mathcalL{{\mathcal L}}
\def\L{{\mathcalL}}
\def\grad{{\nabla}}
\def\Id{{\rm{\it Id}}}
\def\eps{{\varepsilon}}
\def\L{{\mathcal L}}
\def\mathcalL{{\mathcal L}}
\def\L{{\mathcalL}}
\def\grad{{\nabla}}
\def\Id{{\rm{\it Id}}}
\def\eps{{\varepsilon}}
\begin{document}
\title {A Rockafellar-type theorem for non-traditional costs}
\date{}
\author{S. Artstein-Avidan, S. Sadovsky, K. Wyczesany}

\begin{abstract}
In this note, we present a unified approach to the problem of existence of a potential for the optimal transport problem with respect to non-traditional cost functions, that is, costs that assume infinite values.  We establish a new method that relies on proving solvability of a special (possibly infinite) family of linear inequalities. When the index set of this family is countable, we give a necessary and sufficient condition on the coefficients that assures the existence of a solution, and which, in the setting of transport theory, we call $c$-path-boundedness. In the case of an uncountable index set, one needs an additional assumption for solvability$^{1}$\thanks{$^{1}$This is a revised version that corrects a mistake in the original paper regarding the solvability of an uncountable family of inequalities.}. 
We propose a sufficient condition in this case.  We note that any set admitting a potential must be $c$-path-bounded, and this condition replaces $c$-cyclic monotonicity from the classical theory,  i.e.  when the cost is real-valued. Our method also gives a new and elementary proof for the classical results of Rockafellar, Rochet and R\"uschendorf.

\end{abstract}

\maketitle

\section{Introduction} 
Mass transport problems have been widely studied in mathematics and have a vast array of applications and implications, for an overview, see \cites{villani-book,ambrosio-gigli}. The classical mass transport problem of Monge is that of finding, given probability measures $\mu$ and $\nu$, the infimal total cost of a mapping $T$ mapping one to the other, that is,   
\[\inf \{ \int c(x,Tx)d\mu: T_{\#}(\mu)=\nu\}.\]
Here, $c$ is a cost function, which one may or may not assume has some good continuity or smoothness results (in case the integral is not defined, one may use an outer integral). 

The celebrated theorem of Brenier \cite{brenier} states that for the quadratic cost, $c(x,y)=\|x-y\|^2$ one may find $T$ which attains the infimum, and moreover, this optimal map (which is called the Brenier map) is given as the gradient of some convex function $\varphi$. The function $\varphi$ for which $\grad \varphi = T$ is called a potential for the map $T$. This elegant result, which has many applications, is proven using an important geometric interpretation of the notion of optimality, named \emph{cyclic monotonicity} of the support of the optimal plan. Rockafellar's 
Theorem \cite{rockafellar} states that a set is cyclically monotone if and only if it lies in the subgradient of some convex function.  The generalization to other finite-valued costs was established by Rochet and R\"uschendorf \cites{rochet, ruschendorf}.  The notion of $c$-cyclic monotonicity (called simply cycylic monotonicity when the cost is quadratic) is defined as follows : 
We say that a set $G\subset X\times Y$ is $c$-cyclically monotone if for any finite set $\{(x_i,y_i)\}_{i=1}^m\subset G$, 
\[\sum_{i=1}^m c(x_i,y_i)\le \sum_{i=1}^m c(x_{i+1},y_{i}),\]
where $x_{m+1} = x_1$. The Rochet-R\"uschendorf theorem 
states that a set is $c$-cyclically monotone if and only if it lies in the $c$-subgradient of a $c$-class function $\varphi$ (see definitions in Sections \ref{sec:csg} and \ref{sec:ccm}). In such a case we say that $\varphi$ is a $c$-potential for the set.

This condition of $c$-cyclic monotonicity is very useful, and when the cost have only finite values then a plan which is concentrated on a set $G$ which is $c$-cyclically monotone must be optimal, and admit a $c$-potential. However, this statement fails when the cost is allowed to attain the value $+\infty$, namely for \emph{non-traditional costs}. In such cases optimality of a plan is not equivalent to $c$-cyclic monotonicity of the support (see Example 3.1 in \cite{ambrosio-pratelli}).

In this work, we modify the condition of $c$-cyclic monotonicity in such a way so that we may extend the results of  Rockafellar, Rochet and R\"uschendorf to the non-traditional setting.

\begin{definition}%
	Fix sets $X,\,Y$ and $c:X\times Y\to (-\infty, \infty]$. A subset $G\subset X\times Y$ will be called {\bf $c$-path-bounded} if $c(x,y)<\infty$ for any $(x,y)\in G$, and for any $(x,y)\in G$ and $(z,w)\in G$, there exists a constant $M=M((x,y), (z,w))\in \RR$ such that the following holds: For any $m\in {\mathbb N}$ and any $\{(x_i, y_i): 2\le i\le m-1\} \subset G$, denoting $(x_1, y_1) = (x,y)$ and $(x_m, y_m) = (z,w)$,   we have 
	\[ \sum_{i=1}^{m-1} \big(c(x_i, y_i)-c(x_{i+1}, y_{i})\big) \le M.  \]   
\end{definition}

It is not hard to check that the above is a {\em stronger} condition than $c$-cyclic monotonicity (see Section \ref{sec:ARRresult}), and equivalent to it when the cost is finitely valued. For a given plan to have a $c$-potential   
it must be concentrated on a
$c$-path-bounded set. We address the reverse implication in this paper. The only additional condition needed is one that guarantees $G$ is not, in some sense, completely split into two components.

  \begin{definition}\label{def:black-hole-cost}
 	Let $c:X\times Y \to(-\infty , \infty]$ be a cost function. We say that a set $G\subset X\times Y$ does not have an infinite black hole if for every   infinite subset $G_0\subset G$ there exists $y\in P_YG_0$ and $z\in P_X(G\setminus G_0)$ such that $c(z,y)< \infty$.
 \end{definition}
 
We stress that no measurability assumptions are needed, and the result is purely a set-theoretic one, stripping measurability off  of this part of measure-transport theory. 
Our main theorem is: 
\begin{manualtheorem}{1.1}\label{thm:Non-tradRR}
Let $X,\,Y$ be two arbitrary sets and $c:X\times Y\to (-\infty, \infty]$ an arbitrary cost function. Assume that $G\subset X\times Y$ is a $c$-path-bounded subset that is countable, or if it is uncountable, it does not have an infinite black hole. Then there exists a $c$-class function $\varphi:X\to [-\infty, \infty]$ such that $G\subset \partial^c \varphi$.
\end{manualtheorem}

Our main tool is a new, and seemingly unnoticed, fact about the solvability of an infinite family of linear inequalities. We mention that using it for the case where $\alpha_{i,j}$ are all finite, we get a new elementary proof for the classical Rockafellar-Rochet-R\"uschendorf theorem. 

\begin{manualtheorem}{1.2}
	Let $\{\alpha_{i,j}\}_{i,j \in I}\in [-\infty, \infty)$, where $I$ is some arbitrary index set, and with $\alpha_{i,i} = 0$. The system of inequalities 
	\begin{equation}\label{eq:linear-sys} \alpha_{i,j} \le x_i - x_j, \quad i,j \in I \end{equation} 
	has a solution if 
    (a) for any $i,j\in I$ there exists some constant $M(i,j)$ such that for any $m$ and any $i_2,\cdots,i_{m-1}$, letting $i = i_1$ and $j = i_m$ one has that  $\sum_{k=1}^{m-1} \alpha_{i_k, i_{k+1}}\le M(i,j)$, and \\
    (b) either $I$ is at most countable, or, if $I$ is uncountable then for every {infinite subset} $J\subset I$ there exist some $j\in J, i\notin J$ with $\alpha_{j,i}>-\infty$.
\end{manualtheorem}
The proof to the theorem above appears in  
Section \ref{sec:ARRresult}. 

While in this work we focus on the characterization of sets $G\subset X \times Y$ that admit a $c$-potential with respect to an arbitrary cost without mentioning measures, one should keep in mind that in applications $G$ will be a set on which a transference plan between two probability measures is concentrated. Such applications were studied by Ambrosio and Pratelli \cite{ambrosio-pratelli},   Pratelli \cite{pratelli},  Bianchini and Caravenna \cite{bianchini-caravenna,bianchini-caravenna-long} and Beiglb\"ock, Goldstern, Maresch, Schachermayer \cite{optimal-and-better}.  It was shown in the latter that the existence of a $c$-potential for a plan with finite total cost implies optimality. 

This work is also motivated by the case of the so-called {\em polar cost},   connected to the polarity transform, which has received much attention lately \cite{hidden,shiri-yanir,artstein-florentin-milman}.   This is an important example to which the Rockafellar-Rochet-R\"uschendorf theorem %
does not apply. Indeed, when $c$ is the polar cost, one may find $c$-cyclically monotone sets which are not contained in the $c$-subgradient of any $c$-class function (see Section \ref{sec:example-non}). Transportation with respect to polar cost was first considered in \cite{hila}, and the results obtained there are to appear in \cite{paper3}.

Special families of non-traditional costs were also studied, among others, by Bertrand and Puel \cite{bertrand-puel}, and  Bertrand, Pratelli and Puel \cite{bertrand-pratelli-puel}, who considered relativistic cost functions of the form $c(x,y)=h(x-y)$, where $h$ is a strictly convex and differentiable function, restricted to a strictly convex set $K$, and infinite outside $K$, see also \cite{jimenez-chloe-santambrigio}. A special case of such a cost is the relativistic heat cost, linked with the relativistic heat equation considered by Brenier \cite{brenier-relativ-cost}. Further, a non-traditional cost function on the sphere is used in \cite{bertrand-gauss-curvature} for a proof of Alexandrov's theorem about prescribing the Gauss curvature of convex sets in the Euclidean space, following Oliker \cite{oliker}. Further results about existence of transport plans which admit a $c$-potential with respect to non-traditional costs can be found in \cite{paper2}. 

{\textbf{Structure of the paper}}: In Section \ref{sec:bk} we provide the background for the problem, and define the notions of $c$-transform, $c$-subgradient, and $c$-cyclic monotonicity. We connect these notions to the classical example of the Legendre transform and the more recent polarity transform. We then explain the Rockafellar-Rochet-R\"uschendorf theorem and show that it does not hold for non-traditional costs.  In Section \ref{sec:ARRresult} we define $c$-path-bounded sets, prove our main theorem, and show that it implies the  Rockafellar-Rochet-R\"uschendorf theorem for traditional costs. In Section \ref{sec:sc} we show some consequences of our theorem under further restrictions, in particular that it implies many of the previous results. 

\textbf{Acknowledgement}: The first named author would like to thank Z.~Artstein for useful conversations. The authors received funding from the
European Research Council (ERC) under the European Union’s Horizon 2020
research and innovation programme (grant agreement No 770127). The second named author is grateful to the Azrieli foundation for the award of an Azrieli fellowship. We sincerely thank Yuan Gao (University of British Columbia) for bringing a mistake in the original version of the paper to our attention.

\section{Costs and potentials}\label{sec:bk}

\subsection{Mass transport} 

The initial data of a mass transport problem is $(X,\Sigma_X),(Y, \Sigma_Y)$, 
where $X,\,Y$ are sets and $\Sigma_X, \Sigma_Y$ are $\sigma$-algebras, and a cost function $c:X\times Y\to (-\infty, \infty]$, 
which is assumed to be measurable (with respect to the product $\sigma$-algebra). One usually considers $X$ and $Y$ which are separable metric spaces endowed with the Borel $\sigma$-algebra, and in particular $X = Y =  \RR^n$  serve as a good model space.

Given  $\mu\in {\mathcal P(X)}$ a probability measure on $X$, and $\nu\in {\mathcal P}(Y)$, we say that a measurable map $T:X\to Y$ is a transport   map  between $\mu$ and $\nu$ if 
\[ \mu(T^{-1}(B)) = \nu(B)\]
for all measurable $B\in \Sigma_Y$.

We say $\gamma \in {\mathcal P}(X\times Y)$ is a transport plan between $\mu$ and $\nu$, and denote $\gamma\in \Pi(\mu, \nu)$, if its  marginals on $X$ and $Y$ are $\mu$ and $\nu$ respectively. In particular, to any transport map $T$ there corresponds (in the obvious way) a transport plan concentrated on its graph, denoted usually by $(Id,T)_{\#}\mu$.

The total cost of transporting $\mu$ to $\nu$ is defined by 
\[ C(\mu, \nu) = \inf\left\{ \int_{X\times Y} c(x,y) d\gamma: \gamma \in \Pi(\mu, \nu)\right\}.\] 

A central theorem of Kantorovich \cite{kantorovich,kantorovich2} states that in the case of a lower semi-continuous cost function (for further generalisations see e.g. \cite{general-duality-MK}), the above infimum is attained and is given by 
\begin{equation}\label{eq:thm-kantorovich}
\begin{aligned}
C(\mu, \nu) = \sup\bigg\lbrace&\int_X\varphi d\mu + \int_Y\psi d\nu:\\ 
&\varphi \in L_1(X, d \mu ), \, \psi\in L_1(Y, d\nu), \, (\varphi, \psi) {\rm ~ admissible}\bigg\rbrace, 
\end{aligned}
\end{equation}
where a pair of functions is called admissible if $\varphi(x)+\psi(y) \le c(x,y)$ for all $x\in X,\, y\in Y$.

\subsection{The $c$-class and basic functions}

Given a cost $c:X\times Y \to (-\infty, \infty]$ (no measurability assumptions), and a function $\varphi:X\to [-\infty, \infty]$, we define its $c$-transform $\varphi^c:Y\to [-\infty, \infty]$ by 
\[ \varphi^c(y) = \inf_{x\in X} \left(c(x,y)-\varphi(x)\right).\]   
Similarly, when $\psi: Y\to [-\infty, \infty]$, we use the same notation to define
\[ \psi^c(x) = \inf_{y\in Y} \left(c(x,y)-\psi(y)\right).\]
In the case where the expression considered on the right hand side is $+\infty - (+\infty)$, we stipulate that this quantity is equal to $+\infty$. This corresponds to the fact that for $\varphi(x) + \psi(y) \le \infty$ to hold true, no condition on $\varphi(x)$ or $\psi(y)$ is needed. 
To avoid arithmetic manipulations with infinite numbers, one may instead consider the infimum in the definition of $\varphi^c$ to be taken only  over those {points} $x$ for which $c(x,y)<\infty$, and similarly for $\psi^c$.

It is easy to check that if $(\varphi, \psi)$ is an admissible pair then $\psi \le \varphi^c$ and $\varphi \le \psi^c$, namely the $c$-transform  
maps a function $\varphi$ to the (point-wise) largest $\psi$ such that the pair $(\varphi, \psi)$ is admissible.  In particular, 
 $\varphi^{cc} \ge \varphi$, since $(\varphi, \varphi^c)$ is an admissible pair. As a result of this observation we get that $\varphi^{ccc} = \varphi^c$, since in addition the mapping $\varphi\mapsto \varphi^c$ is order reversing. 
 
In light of this discussion, we define the $c$-class associated with a cost function $c:X\times Y\to (-\infty, \infty]$ as the image of the $c$-transform, namely 
 \[ \{\varphi^c: \, \varphi:X\to [-\infty, \infty]\}{\rm~~ or ~~} 
 \{\psi^c: \,\psi:Y\to [-\infty, \infty]\}. \] 
When $X = Y$ and $c(x,y) = c(y,x)$ the two transforms coincide, as do the two $c$-classes. We slightly abuse notation, since throughout this note we will be considering \textbf{symmetric} cost functions,  by referring to ``the $c$-class'' where in fact we should formally make a distinction   between the $c$-class of functions on $X$ and the $c$-class of functions on $Y$. 

Within the $c$-class, we define the sub-class of basic functions to be functions of the form $\varphi(x) = c(x,y_0) + \beta$ (and $\psi(y) = c(x_0, y)+\beta$, respectively) for some $x_0\in X, \, y_0\in Y$ and $\beta \in \RR$. 
By definition, every $c$-class function is an infimum of basic functions, and it is not hard to check that the $c$-class is   closed under infimum. 

\subsection{The $c$-subgradient}\label{sec:csg}

Given a cost $c:X\times Y \to (-\infty, \infty]$, and a $c$-class function $\varphi$, we define its $c$-subgradient by 
\[ \partial^c \varphi = \{ (x,y) \in X\times Y: \varphi(x) + \varphi^c(y) = c(x,y) <\infty \}.\]  
Clearly $(x,y) \in \partial^c \varphi$ if and only if $(y,x) \in \partial^c \varphi^c$.  

As a motivation for this definition one may go back to Kantorovich's theorem, recalled in \eqref{eq:thm-kantorovich}, and note that a one sided inequality is trivial, since for any $\gamma\in \Pi(\mu, \nu)$, and any admissible pair $\varphi, \psi$, we have
\[  \int_X\varphi d\mu + \int_Y\psi d\nu = \int_{X\times Y} (\varphi(x) + \psi(y)) d\gamma \le \int_{X\times Y} c(x,y) d\gamma, \]
and for equality to hold one needs that $\gamma$-almost everywhere in $X\times Y$, we will have $\varphi(x) + \psi(y) = c(x,y)$. Since replacing $\psi$ by $\varphi^c$ only increases the integral, we see that in fact for equality to hold we need the plan $\gamma$ to be concentrated on the $c$-subgradient of some $c$-class function $\varphi$. We stress that this only serves as motivation and will not be used in this paper, in particular we have not assumed any measurability for the cost or the functions, and the functions we later consider might not be measurable or integrable.

\subsection{$c$-cyclic monotonicity}\label{sec:ccm}

Given a cost $c:X\times Y \to (-\infty, \infty]$, a subset $G\subset X\times Y$ is called $c$-cyclically monotone if $c(x,y)<\infty$ for all $(x,y)\in G$, and for any $m$, any $\{(x_i, y_i): 1\le i\le m\} \subset G$, and any permutation $\sigma$ of $[m] = \{1, \ldots,m\}$ it holds that 
\begin{equation}\label{eq:ccm}  \sum_{i=1}^m c(x_i, y_i) \le \sum_{i=1}^m c(x_i, y_{\sigma(i)}).  \end{equation}
 To our best knowledge, this definition was first introduced by Knott and Smith \cite{knott-smith}, as a generalization of cyclic monotonicity considered by Rockafellar \cite{rockafellar} in the case of quadratic cost. When $G\subset \partial^c\varphi$, we have by definition that 
\[
c(x_i, y_i) = \varphi(x_i) + \varphi^c(y_i) \quad {\rm and} \quad c(x_i, y_{\sigma(i)}) \ge \varphi(x_i) + \varphi^c(y_{\sigma(i)}),  \]
and summing these inequalities over $i\in [m]$ gives \eqref{eq:ccm}.
In other words,  $c$-cyclic monotonicity is a necessary condition for a set $G\subset X\times Y$ to have a {\bf{$c$-potential}} $\varphi$, that is, a $c$-class function for which $G\subset \partial^c\varphi$.

\subsection{Example: The  {quadratic} cost}

The most well studied example is that of quadratic cost, namely $X = Y = \RR^n$ and the Euclidean distance squared $\|x-y\|^2/2$. From the transportation perspective, since $\|x-y\|^2/2 = \|x\|^2/2 -\iprod{x}{y} + \|y\|^2/2$, only the mixed term is important. We will thus work with $c(x,y) = -\iprod{x}{y}$, bearing in mind that in this setting the cost is no longer positive, and is unbounded  both from above and below. It  is, however, traditional, namely does not assume the value $+\infty$. 

 The $c$-transform is given by 
\[ \varphi^c(y) = \inf_{x\in \RR^n}\left( -\iprod{x}{y} - \varphi(x)\right), \] 
which can be written as
\[ -\varphi^c = {\mathcal L} (-\varphi),\]
where $\L \psi(y) = \sup_{x}(\iprod{x}{y}-\psi(x))$ is the Legendre transform  (for an overview see e.g. \cite{rockafellar-book}).
It is easy to check that the associated $c$-class is that of all concave functions on $\RR^n$ which are upper semi-continuous.

\subsection{The Rockafellar-Rochet-R\"uschendorf Theorem}\label{subsec: RRR-thm}

For costs $c:X\times Y\to \RR$, namely traditional costs, this theorem states that the condition of $c$-cyclic monotonicity for a set $G\subset X\times Y$ is equivalent to the condition that there exists a $c$-class function such that $G\subset \partial^c \varphi$ \cite{rockafellar, ruschendorf}. 

This fundamental theorem has a beautiful constructive proof. Indeed, one fixes some element $(x_0, y_0)\in G$ and  sets 
\begin{align}\label{def:Rock-function}
    \varphi(x) = \inf_{x,\, (x_i, y_i)_{i=1}^m \subset G}
\left( c(x,y_m) - c(x_0, y_0) + \sum_{i=1}^m\left( c(x_i, y_{i-1}) - c(x_i, y_i)\right) \right).
\end{align} 
 
    Interestingly, examining the proof (which can be found in many places, see for example \cite{villani-book}), there is only one step where the $c$-cyclic monotonicity is used, and it is to prove that at $(x_0, y_0)$ the function $\varphi$ defined above  is finite. For non-traditional costs the proof fails (as does the theorem, as we shall shortly see), and the function $\varphi$ defined above may be infinite on points $x$ with $(x,y)\in G$, in which case $(x,y)$ cannot belong to $\partial^c \varphi$.

Below we present a corresponding theorem for non-traditional costs. Applied to the case of traditional costs, our method gives a new proof for the Rockafellar-Rochet-R\"uschendorf theorem, which is, in our view,  more intuitive.

\subsection{Known results}\label{subsec:known-results}

Let us emphasize that in this note we are solely devoted to investigating when, for a given set, one can find a $c$-potential, and therefore there are no measures involved. However, the literature on the optimality of transport plans mentioned in the introduction %
includes relevant ideas and results on the topic at hand. 

In particular, an equivalence relation on the elements of a set $G\subset X\times Y$ was introduced \cite[Chapter 5, p.75]{villani-book} and studied \cite{bianchini-caravenna,optimal-and-better}. It is defined as follows: we may associate with $G$ a directed graph with a vertex set $G$ in which there is an edge from $(x,y)$ to $(z,w)$ if $c(z,y)<\infty$. 
On the vertex set of this graph (namely on $G$) we define an equivalence relation $\sim$, where two points are equivalent, $(x,y) \sim (z,w)$, if there exists a directed cycle passing through both (or, equivalently, if there is a directed path from each of the points to the other). This is clearly an equivalence relation.

The following proposition, which will straightforwardly follow from our analysis (see Section \ref{sec:sc}), appeared in \cite[Proposition 3.2.]{optimal-and-better} in a slightly different form.  Its formulation in \cite{optimal-and-better} includes a transport plan between two Borel probability measures,  and the assumption that the cost is bounded from below.  It was proved using the Rockafellar-type function \eqref{def:Rock-function} whereas our proof is different and elementary.

\begin{prop}\label{prop:1equivClassIMPLIEScpathbdd}
Let $c: X\times Y \to   (-\infty ,\infty]$ be a cost function and let $G\subset X \times Y$ be a $c$-cyclically monotone set. Assume the equivalence relation $\sim$ defined above had just one equivalence class. Then there exists a $c$-potential for $G$, i.e. a function $\varphi$ such that $G\subset \partial ^c \varphi$. 
\end{prop}

It was further established in \cite{optimal-and-better},  that the existence of a $c$-potential for a finite transport plan between two measures is equivalent to the plan being \emph{robustly optimal} (see Definition 1.6. in \cite{optimal-and-better}).

\subsection{An example of a  $c$-cyclically monotone set with no $c$-potential}\label{sec:example-non} 

To end this section, and before moving to the proof of our main result, let us present an example of a $c$-cyclically monotone 
set which is not a subset of $\partial^c\varphi$ for any $c$-class $\varphi$. In other words, $G$ has no potential (no offense, $G$). One could extract an example from \cite{ambrosio-pratelli}, but we shall use the polar cost in one dimension, namely $c(x,y) = -\ln (xy-1)$ on $\RR\times \RR$ (where the logarithm of a non-positive number is defined to be $-\infty$), to underline its importance %
and draw attention to the fact that the regularity of the cost function does not play a role in the existence of a $c$-potential. 

For the polar cost,  $c$-cyclic  monotonicity is equivalent to the set lying on the graph of a decreasing function. More precisely we prove the following lemma. 

\begin{lem}\label{lem:pcm-oneD} Let $c(x,y) = -\ln (xy-1)$.  
	A set $G\subset \RR^+\times \RR^+$ is $c$-cyclically monotone 
	if and only if for any $(x_1,y_1), (x_2, y_2)\in G$ one has $(x_1 -x_2)(y_1 - y_2)\le 0$.  
\end{lem}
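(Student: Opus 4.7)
The plan is to prove the two implications separately: the forward direction follows from a direct computation, while the reverse will require a bubble-sort reduction on permutations.

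For the forward direction, I would apply the definition of $c$-cyclic monotonicity with $m=2$ and the nontrivial transposition, obtaining $c(x_1,y_1)+c(x_2,y_2)\le c(x_1,y_2)+c(x_2,y_1)$. When all four products $x_iy_j - 1$ are positive, exponentiating $-c$ and using the algebraic identity
\[ (x_1y_1-1)(x_2y_2-1) - (x_1y_2-1)(x_2y_1-1) = -(x_1-x_2)(y_1-y_2) \]
yields exactly the desired condition $(x_1-x_2)(y_1-y_2)\le 0$. In the remaining case, some $x_iy_j \le 1$ with $i\ne j$ makes one of the right-hand side costs $+\infty$ so that the $c$-cyclic monotonicity inequality is vacuous; combining $x_iy_j\le 1$ with $x_1y_1>1$ and $x_2y_2>1$ forces $x_1<x_2$ and $y_1>y_2$ (or the symmetric situation), so the two-point condition still holds.

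For the reverse direction, I would first sort the points so that $x_1\le x_2\le\cdots\le x_m$; the two-point hypothesis then forces $y_1\ge y_2\ge\cdots\ge y_m$ (breaking any ties consistently). It remains to show that for every permutation $\sigma$,
\[ \sum_{i=1}^m c(x_i,y_i) \;\le\; \sum_{i=1}^m c(x_i, y_{\sigma(i)}). \]
My plan is to bubble-sort $\sigma$ into the identity: whenever $\sigma$ has an inversion, i.e.\ a pair $i<j$ with $\sigma(i)>\sigma(j)$, swap the values $\sigma(i)$ and $\sigma(j)$. The change in the cost sum,
\[ \bigl(c(x_i,y_{\sigma(j)})+c(x_j,y_{\sigma(i)})\bigr) - \bigl(c(x_i,y_{\sigma(i)})+c(x_j,y_{\sigma(j)})\bigr), \]
is nonpositive by the same algebraic identity, applied with $a=x_i\le b=x_j$ and $c=y_{\sigma(j)}\ge d=y_{\sigma(i)}$, since the relevant factor becomes $(b-a)(c-d)\ge 0$. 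Hence each swap weakly decreases the sum, and after finitely many swaps we reach the identity permutation.

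The main subtlety will be handling the $+\infty$ values introduced by the non-traditional cost, since the algebraic inequality is only directly valid when all four products exceed $1$. If the right-hand side of the target inequality is $+\infty$ then there is nothing to prove, so I may assume $x_iy_{\sigma(i)}>1$ for every $i$. I would then verify that each bubble-sort swap preserves finiteness: with notation as above, $x_iy_{\sigma(j)} \ge x_iy_{\sigma(i)} > 1$ (since $y_{\sigma(j)}\ge y_{\sigma(i)}$) and $x_jy_{\sigma(i)} \ge x_iy_{\sigma(i)} > 1$ (since $x_j\ge x_i$), so both newly introduced cost entries stay finite while the other entries are untouched. This guarantees that the entire reduction proceeds within finite sums, so the monotone decrease of cost under swaps carries through to the identity and closes the argument.
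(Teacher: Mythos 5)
Your forward direction is essentially the paper's, and is in fact slightly more careful: where the paper dismisses the case of an infinite cross cost with ``there is nothing to prove,'' you correctly observe that the conclusion $(x_1-x_2)(y_1-y_2)\le 0$ still requires (and admits) a one-line verification from $x_iy_j\le 1 < x_1y_1,\,x_2y_2$. Your reverse direction, however, takes a genuinely different route. The paper argues variationally: among the finitely many permutations it picks one minimizing $C(\sigma)=\sum_i c(x_i,y_{\sigma(i)})$; that minimizing matching is $c$-cyclically monotone by construction, hence by the forward direction it pairs the $x_i$ with a decreasing rearrangement of the $y_i$, and since the decreasing rearrangement is unique and the identity already realizes it, $C(\mathrm{Id})$ is minimal. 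You instead run a direct exchange (bubble-sort) argument: each inversion swap weakly decreases the cost via the identity $(au-1)(bv-1)-(av-1)(bu-1)=(b-a)(u-v)$, and — the key point for a non-traditional cost — each swap preserves finiteness of all entries, so the comparison with the identity never passes through an indeterminate expression. Your route is more hands-on and makes the bookkeeping of the $+\infty$ values fully explicit, at the price of checking that swapping a (not necessarily adjacent) inversion strictly reduces the inversion count so the process terminates; the paper's route is shorter but leans on the forward implication and on uniqueness of the decreasing rearrangement, with ties handled only implicitly. Both arguments are correct (and both, like the lemma as stated, tacitly assume $xy>1$ on $G$ so that the diagonal costs are finite, as the definition of $c$-cyclic monotonicity requires).
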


\begin{proof}
	In one direction, assume $G$ is $c$-cyclically monotone and let $(x_1,y_1)\in~G$, $(x_2, y_2)\in~G$. In particular $x_1y_1>1$ and $x_2y_2>1$.  {The}  $c$-cyclic  monotonicity    implies 
	\[ c(x_1, y_1) + c(x_2, y_2) \le c(x_1, y_2) + c(x_2, y_1),\] 
	which we rewrite as
	\[ \ln (x_1y_2-1)+ \ln (x_2y_1-1)\le \ln (x_1y_1-1)+ \ln (x_2y_2-1). \] 
	We may assume that $x_1y_2>1$ and $x_2y_1>1$, or else there is nothing to prove. We thus know that
	$(x_1y_2-1) (x_2y_1-1)\le (x_1y_1-1) (x_2y_2-1)$, 
	which after the rearrangement becomes  $(x_1-x_2)(y_1 - y_2) \le 0$,  as claimed. 
	
	For the other direction, given some $G$ which satisfies that for any $(x_1,y_1)\in G$ and $(x_2, y_2)\in~G$ one has $(x_1 -x_2)(y_1 - y_2)\le 0$, pick some $m$-tuple in $G$. 
	It is enough to show that the subset $\{(x_i, y_i): i\in [m]\}$ is $c$-cyclically monotone. However, as this is a finite subset, one may consider all the sums 
	$C(\sigma) = \sum_{i=1}^m c(x_i, y_{\sigma(i)})$ for some permutation $\sigma\in S_m$ of the set $[m]$. The permutation with minimal cost $C(\sigma)$ (not necessarily unique) will satisfy, by definition, that the set  $\{(x_i, y_{\sigma(i)}): i\in [m]\}$ is $c$-cyclically monotone. In particular, by the first direction, we will have 
	$(x_i -x_j)(y_{\sigma(i)} - y_{\sigma(j)})\le 0$. However, there is only one decreasing rearrangement of a set $\{y_i: i\in [m]\}$, and we know already that the identity permutation ${\Id}$ is such a rearrangement, so in particular $C({Id})$ is minimal. We conclude that $G$ is  	 $c$-cyclically monotone.   
\end{proof}

 Our  {example} set $G$ will be the following 
\[ G = \{ (x,y): \tfrac34\le x<1,\, y = 3-2x \}\cup \{(\tfrac32, \tfrac34)\}.\] 
It is easy to check that $G$ satisfies the condition in Lemma \ref{lem:pcm-oneD}, namely it is  $c$-cyclically monotone for the cost $c(x,y) = -\ln (xy-1)$. 
However,  for the points $(x, y) = (\frac34, \frac32)$ and $(z, w) = 
(\frac32, \frac34)$, we may always add a third point $(x_2, y_2)\in G$ such that 
the following expression 
\[ c(x , y ) - c(x_2, y ) + c(x_2, y_2) - c(z, y_2) \] 
is arbitrarily large.

Indeed, picking $(x_2, y_2) = (t,3-2t)$,   compute 
\begin{eqnarray*}   
	&&c(x , y ) - c(x_2, y ) + c(x_2, y_2) - c(z, y_2) \\&&=
	\ln  8 +  \ln (\frac{3t}{2} -1)    - \ln ((t-1)(1-2t))  
	+\ln (\frac{7}{2} -t) \\  &&\ge  \ln8 -\ln 2 + \ln \frac52 - \ln ((t-1)(1-2t)). 
\end{eqnarray*}
As $t\to 1^{-}$, we have that $(t-1)(1-2t) \to 0^+$. In particular the term depending on $t$ in the lower bound for the expression tends to $+\infty$, and there can be no upper bound for it which does not depend on $t$, but only on the endpoints $ (\frac34, \frac32)$ and $ 
(\frac32, \frac34)$. 

This excludes the possibility for the existence of a $c$-potential, since if there existed some $\varphi$ such that 
$G\subset \partial^c \varphi$, it would necessarily satisfy (as $(x,y) \in G$ and $(x_2, y_2) \in G$) that
\[ c(x , y ) - c(x_2, y ) \le \varphi(x) - \varphi(x_2)  \,\,{\rm and}\,\,c(x_2, y_2) - c(z, y_2)  \le \varphi(x_2) - \varphi(z),\] 
so that in particular 
\[ c(x , y ) - c(x_2, y ) + c(x_2, y_2) - c(z, y_2)  \le \varphi(x) -\varphi(z), \] 
getting a bound for the aforementioned expression which does not depend on $x_2$. 
We have thus shown that $G$ is a $c$-cyclically monotone set which  does not admit a $c$-potential.

\section{A Rockafellar-type result for non-traditional costs}\label{sec:ARRresult}

As we have demonstrated in the previous section, for non-traditional costs it may happen that a set is $c$-cyclically monotone but fails to have a  {$c$-potential}   $\varphi$ in the $c$-class, even when $c$ is continuous and the spaces considered are simply $\RR^n$. 
  The example in Section \ref{sec:example-non} gives a clue as to what condition to consider. We call it $c$-path-boundedness. 

\begin{manualdefinition}{1.1}\label{def:c-path-bdd}
Fix sets $X,\,Y$ and $c:X\times Y\to (-\infty, \infty]$. A subset $G\subset X\times Y$ will be called {\bf $c$-path-bounded} if $c(x,y)<\infty$ for any $(x,y)\in G$, and for any $(x,y)\in G$ and $(z,w)\in G$, there exists a constant $M=M((x,y), (z,w))\in \RR$ such that the following holds: For any $m\in {\mathbb N}$ and any $\{(x_i, y_i): 2\le i\le m-1\} \subset G$, denoting $(x_1, y_1) = (x,y)$ and $(x_m, y_m) = (z,w)$,   we have 
	\[ \sum_{i=1}^{m-1} \big(c(x_i, y_i)-c(x_{i+1}, y_{i})\big) \le M.  \]   
\end{manualdefinition}

It is not hard to see that a $c$-path-bounded set must be $c$-cyclically monotone (indeed, if $(x,y) = (z,w)$  then if there is some path for which the sum is positive, one can duplicate it many times to get paths with arbitrarily large sums). It is also not hard to check (using the same reasoning we used in the example above) that $c$-path-boundedness is a necessary condition for the existence of a $c$-potential (we do this in Section \ref{subsec:summanry}). Our main result (Theorem \ref{thm:Non-tradRR}) affirms that the condition of $c$-path-boundedness is equivalent to the existence of a $c$-potential, as long as the set $G$ is countable or does not have any infinite ``black holes'' (see Definition \ref{def:black-hole}). 


 %

\subsection{Reformulation of the problem}

One can reformulate the problem of finding a $c$-potential for a given set $G\subset X\times Y$ as a question regarding the existence of a solution to a linear system of inequalities (possibly infinitely many of them).

\begin{thm}\label{thm:potential-means-inequalities}
	Let $c: X \times Y \to   (-\infty,\infty]$  be a cost function and let $G\subset X\times Y$. 
	Then
	there exists a $c$-potential for $G$, namely a $c$-class function $\varphi: X\to [-\infty, \infty]$   such that $G \subset  \partial^c\varphi $  if and only if the following system of  {inequalities}
	\begin{equation}\label{eq:system-of-eq-for-potential}
	c(x,y) - c(z,y) \le \varphi(x) - \varphi(z), \end{equation}
	indexed by $(x,y), (z,w)\in G$, has a solution $\varphi: P_X G \to \RR$,  
	where $~{P_XG = \{ x \in X: \exists y   \in Y , (x,y)\in G\}}$.   
\end{thm}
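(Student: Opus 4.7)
The plan is to handle the two directions separately, with the backward one—producing a $c$-class potential from a solution of the system of inequalities—carrying the substance of the argument.

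For the forward direction, I would take a $c$-class $\tilde\varphi$ with $G\subset \partial^c\tilde\varphi$ and propose the restriction $\varphi:=\tilde\varphi|_{P_XG}$ as the candidate solution. Combining, for any $(x,y),(z,w)\in G$, the defining equality $\tilde\varphi(x)+\tilde\varphi^c(y)=c(x,y)$ with the admissibility inequality $\tilde\varphi(z)+\tilde\varphi^c(y)\le c(z,y)$ immediately yields $c(x,y)-c(z,y)\le \varphi(x)-\varphi(z)$. Real-valuedness of $\varphi$ on $P_XG$ drops out of the same equality: since $c(x,y)<\infty$ and the convention stipulates $+\infty-\infty=+\infty$, the equality forces both $\tilde\varphi(x)$ and $\tilde\varphi^c(y)$ to be finite.

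For the backward direction, given $\varphi:P_XG\to\RR$ satisfying the system, I would first extend $\varphi$ to all of $X$ by declaring $\varphi(x)=-\infty$ for $x\notin P_XG$, and then propose $\tilde\varphi:=\varphi^{cc}$ as the desired potential; it lies in the $c$-class by construction. The heart of the matter is the identification, for each $(x,y)\in G$, of
\[ \varphi^c(y)=\inf_{z\in X}\bigl(c(z,y)-\varphi(z)\bigr)=c(x,y)-\varphi(x)\in\RR. \]
Over $z\in P_XG$ the system (applied to $(x,y)$ and $(z,w)\in G$) gives exactly $c(z,y)-\varphi(z)\ge c(x,y)-\varphi(x)$, with equality at $z=x$; over $z\notin P_XG$ each term equals $+\infty$ by the convention, so those indices do not disturb the infimum. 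With $\varphi^c(y)=c(x,y)-\varphi(x)$ in hand, evaluating $\varphi^{cc}(x)\le c(x,y)-\varphi^c(y)=\varphi(x)$ and combining with the general admissibility bound $\varphi^{cc}\ge \varphi$ yields $\tilde\varphi(x)=\varphi(x)$ on $P_XG$; the standard identity $\varphi^{ccc}=\varphi^c$ then delivers $\tilde\varphi(x)+\tilde\varphi^c(y)=\varphi(x)+\varphi^c(y)=c(x,y)$, that is, $(x,y)\in\partial^c\tilde\varphi$.

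The only delicate point, and the step most deserving of scrutiny, is the bookkeeping with $\pm\infty$ in the backward direction—specifically, verifying that the extension $\varphi\equiv-\infty$ off $P_XG$ really does not contaminate the infimum computation of $\varphi^c(y)$. Everything else is routine use of the $c$-transform machinery ($\varphi\le\varphi^{cc}$ and $\varphi^{ccc}=\varphi^c$) already recalled in Section~\ref{sec:bk}.
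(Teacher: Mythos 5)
Your proof is correct. The forward direction matches the paper's argument essentially verbatim. In the backward direction you take a genuinely different route: the paper builds the potential by hand as an infimum of basic functions,
\[
\tilde\varphi(z)=\inf_{(x,y)\in G}\bigl\{\,c(z,y)-c(x,y)+\varphi(x)\,\bigr\},
\]
then uses \eqref{eq:system-of-eq-for-potential} to check it extends $\varphi$ on $P_XG$, and finally verifies the subgradient condition by an inf-sup estimate on $\tilde\varphi^c$. You instead extend $\varphi$ to $X$ by $-\infty$ and propose the canonical biconjugate $\varphi^{cc}$. The advantage of your version is that once the single identity $\varphi^c(y)=c(x,y)-\varphi(x)$ for $(x,y)\in G$ is in place --- which you read off correctly from \eqref{eq:system-of-eq-for-potential} together with the observation that the $-\infty$ extension contributes only $+\infty$ terms to the infimum --- everything else is routine bookkeeping with $\varphi\le\varphi^{cc}$ and $\varphi^{ccc}=\varphi^c$, machinery the paper has already recalled. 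The paper's construction is a bit more explicit about which basic functions appear, whereas yours factors cleanly through the transform. Both arguments quietly assume $c<\infty$ on $G$ when concluding $(x,y)\in\partial^c\tilde\varphi$; this is consistent with the conventions used elsewhere in the paper (e.g.\ in the definition of $c$-path-boundedness), but is worth noting since if $c(x,y)=+\infty$ the inequality \eqref{eq:system-of-eq-for-potential} with $z=x$ would read $\infty-\infty\le 0$ and neither proof would go through.
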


\begin{proof}
	Assume that there exists a 	$c$-potential  $\varphi: X\to [-\infty, \infty]$ such that $G \subset \partial^c\varphi$. We may restrict $\varphi$ to $P_XG$, on which it must attain only finite values, because  $G \subset \partial^c\varphi$ means in particular that $\varphi(x) + \varphi^c(y) = c(x,y) <\infty$. For every $z\in P_X G$ we have 
	\[ \varphi(z) = \inf_{w\in Y} \left(c(z,w) - \varphi^c(w)\right) \le c(z,y) -\varphi^c(y). \]
	At the same time, since $(x,y) \in \partial^c \varphi$,  
	\[ \varphi(x) = \inf_{w\in Y} \left(c(x,w) - \varphi^c(w) \right)= c(x,y) -\varphi^c(y).  \]
	Taking the difference of the two equations we get 
	\[ c(x,y) - c(z,y) \le \varphi(x) - \varphi(z). \]  
	
	For the other direction, assume we have a solution to the system of  {inequalities}. We would like to extend it to some $c$-class function defined on $X$. To this end let
	\[ \tilde{\varphi}  (z)  = \inf_{(x,y)\in G}  \{ c(z, y ) - c(x,y) + \varphi(x)\}.\]
	We need to show that the function $\tilde{\varphi}$, which is clearly in the $c$-class, satisfies that it is an extension of the original function $\varphi: P_XG \to \RR$, and that it is a $c$-potential, namely $G\subset \partial^c\tilde{\varphi}$. 
	
	The assumption \eqref{eq:system-of-eq-for-potential} implies that for $z\in P_XG$ we have 
	\[ \varphi(z) \le c(z,y) - c(x,y) +  \varphi(x) \]  
	and so the infimum  in the definition of the extended $\tilde{\varphi}$ is attained at $z$ itself. In particular, we get that $\tilde{\varphi}$ is indeed an extension of the original function $\varphi$. This means that 
 if $(x,y)\in G$ then 
 \begin{eqnarray*} \tilde{\varphi}^c(y) &=&
 	 \inf_{z\in X}\left( c(z,y) - \tilde{\varphi}(z)\right)\\ & = & 
 	 \inf_{z\in X}\sup_{(x',y')\in G}\left( c(z,y) -
 	 c(z, y' ) + c(x',y') - \varphi(x')
 	 \right)\\& \ge & 
 	 \inf_{z\in X}\left( c(z,y) -
 	 c(z, y ) + c(x,y) - \varphi(x)\right)\\& =& c(x,y)-\varphi(x) = c(x,y) -\tilde{\varphi}(x).\end{eqnarray*}
 As the opposite inequality is trivial, we get 
 $(x,y)\in \partial^c \tilde{\varphi}$, as required. 
\end{proof}

The above theorem, while very simple in nature, reduces the question of finding a $c$-potential to the question of determining when a set of linear inequalities has a solution. The index set for the inequalities are pairs $((x,y),z)\in G\times P_XG$ (or, equivalently, pairs  $((x,y),(z,w))\in G\times G$, where we ignore $w$ as it does not appear in the inequalities). 
The solution vector we are looking for is indexed by $P_XG$, and we denote it $(\varphi(x))_{x\in P_XG}$. In fact, formally, we should be using $(\varphi (x,y))_{(x,y)\in G}$, which seems to allow multi-valued $\varphi$. However, note that if $(x,y)$ and $(x,y')$ are both in $G$ then 
\[c(x,y) - c(x,y) \le  \varphi(x,y) - \varphi(x,y')  \]  and
\[c(x,y') - c(x,y') \le  \varphi(x,y') - \varphi(x,y)  \] 
which means 
\[   \varphi(x,y) = \varphi(x,y').  \] 
In other words, even if we do index the vector by $(x,y)\in G$ instead of $x\in P_XG$, the solution vector depends only on the first coordinate.

\subsection{Solutions for families of linear inequalities}

Our main theorem will follow from the next theorem regarding systems of linear inequalities.

\begin{manualtheorem}{1.2}\label{thm:linear-ineq-system-general}
	Let $\{\alpha_{i,j}\}_{i,j \in I}\in [-\infty, \infty)$, where $I$ is some arbitrary index set, and with $\alpha_{i,i} = 0$. The system of inequalities 
	\begin{equation}\label{eq:linear-sys} \alpha_{i,j} \le x_i - x_j, \quad i,j \in I \end{equation} 
	has a solution if (a) for any $i,j\in I$ there exists some constant $M(i,j)$ such that for any $m$ and any $i_2,\cdots,i_{m-1}$, letting $i = i_1$ and $j = i_m$ one has that  $\sum_{k=1}^{m-1} \alpha_{i_k, i_{k+1}}\le M(i,j)$, and \\ (b) either $I$ is at most countable, or, if $I$ is uncountable then for every {infinite subset} $J\subset I$ there exist some $j\in J, i\notin J$ with $\alpha_{j,i}>-\infty$.
\end{manualtheorem}

Instead of proving the theorem directly, we shall prove the following theorem, which at first glance might seem weaker.
\begin{thm}\label{thm:linear-ineq-system-weaker}
	Let $\{a_{i,j}\}_{i,j \in I}\in [-\infty, \infty)$, where $I$ is some arbitrary index set.  Assume that (a) for any $m\ge1$ and any $i_1,i_2,\cdots,i_m$ it holds that $a_{i_1, i_m} \ge \sum_{k=1}^{m-1} a_{i_k, i_{k+1}}$, and (b) either $I$ is at most countable, or, if $I$ is uncountable then for every {infinite subset} $J\subset I$ there exist some $j\in J, i\notin J$ with $\alpha_{j,i}>-\infty$.
\end{thm}

Clearly, Theorem \ref{thm:linear-ineq-system-general} implies Theorem \ref{thm:linear-ineq-system-weaker}. In fact, the reverse implication holds as well. We will show this by proving Theorem \ref{thm:linear-ineq-system-general} under the assumption of Theorem \ref{thm:linear-ineq-system-weaker}.

\begin{proof}[Proof that Theorem \ref{thm:linear-ineq-system-weaker} implies Theorem \ref{thm:linear-ineq-system-general}]
	
We will use Theorem \ref{thm:linear-ineq-system-weaker}. Assume that 
for any $i,j\in I$ there exists some $M({i, j})$ such that 
for any $m$ and any $\{i_k\}_{k=2}^{m-1}$, letting $i_1 = i$ and $i_m = j$   it holds that
\[ \sum_{k=1}^{m-1}\alpha_{i_k, i_{k+1}}\le M({i, j}).\] 
Define new constants $ {a}_{i,j}\in [-\infty, \infty)$ as follows: 
\[  {a}_{i,j} = \sup\{  \sum_{k=1}^{m-1}\alpha_{i_k, i_{k+1}}  : m\in {\mathbb N}, m\ge 2,\, i_2, \ldots, i_{m-1} \in I\}.\] 
By the above condition, the right hand side is bounded from above and so the supremum is not $+\infty$.

We first claim that the  system of equations ${a}_{i,j}  
\le   x_i - x_j,
$ satisfies the conditions of Theorem \ref{thm:linear-ineq-system-weaker}. Assume we are given $i_1, i_2,\cdots,i_{m-1}, i_m$, and we want to prove that $a_{i_1, i_m} \ge \sum_{k=1}^{m-1} a_{i_k, i_{k+1}}$. Fix $\eps>0$. For each $k\in [m]$ 
use the definition of $a_{i_{k}, i_{k+1}}$ to pick some $m_k$ and $i_2^{(k)}, \ldots, i_{m_k-1}^{(k)}$ such that, letting $i_1^{(k)} = i_k$ and $i_{m_k}^{(k)} = i_{k+1}$, we have  
\[ a_{i_k,i_{k+1}} \le  \sum_{l=1}^{m_k-1}\alpha_{i_l^{(k)}, i_{l+1}^{(k)}}  + \eps/m.\]

 We have thus identified some finite set of indices in $I$, the set 
 \[ J = \{i_k, i_{2}^{(k)}, \ldots, i_{m_k-1}^{(k)}: k\in [m-1] \}\cup\{i_m\},\] 
 which is naturally arranged as a path from $i_1$ to $i_m$.  Using again the 
 definition of $a_{i,j}$, the path thus defined participates in the supremum, and we have that 
 \[ a_{i_1,i_m} \ge 
  \sum_{k=1}^{m} ({a}_{i_k,i_{k+1}} - \eps /m) =  \big(\sum_{k=1}^{m} {a}_{i_k,i_{k+1}}\big)  - \eps.
 \]
As this holds for any $\eps$, we get the inequality in the condition (a) of Theorem \ref{thm:linear-ineq-system-weaker}.

Note that for any $J\subset I$ and $j\in J,\ i\in I\setminus J$, if $\alpha_{j,i}>-\infty$ then clearly $a_{j,i}\ge \alpha_{j,i}>-\infty$, demonstrating condition (b) of Theorem \ref{thm:linear-ineq-system-weaker}.
 
 Applying Theorem \ref{thm:linear-ineq-system-weaker}, we see that the system of inequalities
\begin{equation}\label{eq:system-we-want-2}
{a}_{i,j} %
\le   x_i - x_j,
\end{equation}  
admits a solution. Moreover, since $a_{i,j} \ge \alpha_{i,j}$ by definition, the resulting vector $x$ is also a solution of the original system of inequalities. 
\end{proof}

Having made the reduction from Theorem \ref{thm:linear-ineq-system-general} to Theorem \ref{thm:linear-ineq-system-weaker}, we proceed by proving the latter. The first case which we shall prove is when the index set $I$ is at most countable.

\subsection{The countable case}

\subsubsection{Graph theoretical component of the proof}
We shall make use of a decomposition of a nearly balanced weighted acyclic directed graph into weighted paths. A directed graph is called \emph{acyclic} if there are no directed cycles in the graph. We call a weighted graph \emph{nearly balanced} if we have some control over the difference between the in-coming and out-coming total weight in each vertex. We assume that all the weights are non-negative.

\begin{prop}\label{prop:combi-splitADGtopaths}
	Let $\,\Gamma = (V, %
	E, (w_e)_{e\in E})$ be a finite directed weighted acyclic graph. Assume that it is almost balanced in the following sense: for some fixed vector $(\eps _v)_{v\in V}$ with non-negative entries, we have for every vertex $v\in V$ that
	\[ \sum_{(x,v)\in E}w_{(x,v)} - \sum_{(v,y)\in E}w_{(v,y)} \in [-\eps_v, \eps_v].\] 
	Then there exists a weighted decomposition of $\Gamma$ into paths $P_k = v_{1}^{(k)} \to v_2^{(k)} \to \cdots \to v_{m_k}^{(k)}$ with equal weights $\mu_k$ for each edge in $P_k$, such that 
	\begin{equation}\label{eq:sum-of-muk} w_e =\sum_{k: \, e\in P_k}\mu_k  \quad {\rm and}\quad \sum_{k} \mu_k < \frac12 \sum \eps_i. \end{equation} 
	Moreover, it holds individually for each $v\in V$ that 
	\begin{equation}\label{eq:sum-of-muk-on-a-vertex} \sum_{\{k: \, v = s_k \ \text{or} \  v = f_k\}} \mu_k < \eps_v.\end{equation}	
	Here $s_k = v_1^{(k)}$ denotes the starting vertex of the path $P_k$ and $f_k = v_{m_k}^{(k)}$ denotes its end point. 
	In fact, if   $\sum_{(x,v)\in E}w_{(x,v)} \le  \sum_{(v,y)\in E}w_{(v,y)}$ then $v\neq f_k$ for any $k$ and if $\sum_{(x,v)\in E}w_{(x,v)} \ge \sum_{(v,y)\in E}w_{(v,y)}$ then $v\neq s_k$ for any $k$. 
\end{prop}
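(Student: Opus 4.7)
My plan is to reduce to a standard flow-decomposition result by introducing a super-source and a super-sink that absorb the imbalances. For each $v\in V$ I set $I_v := \sum_{(x,v)\in E}w_{(x,v)}$ and $O_v := \sum_{(v,y)\in E}w_{(v,y)}$; by assumption $|I_v-O_v|\le \eps_v$. I form an enlarged graph $\Gamma^* = (V\cup\{s^*,t^*\},E^*)$ by adding, for every $v$ with $O_v>I_v$, an edge $(s^*,v)$ of weight $O_v-I_v$, and for every $v$ with $I_v>O_v$, an edge $(v,t^*)$ of weight $I_v-O_v$. Since $\sum_v(I_v-O_v)=0$ in any finite directed weighted graph, the total weight leaving $s^*$ equals the total weight entering $t^*$; call this common value $D$. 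In $\Gamma^*$ each $v\in V$ is now in/out balanced, and since $\Gamma$ is acyclic while $s^*$ has no incoming edges and $t^*$ no outgoing ones, $\Gamma^*$ remains acyclic.

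Next I apply classical flow decomposition to $\Gamma^*$: as long as some edge has positive weight, I locate a positive-weight directed path from $s^*$ to $t^*$, set $\mu$ equal to the minimum weight along that path, and subtract $\mu$ from every edge on the path. Balance at internal vertices is preserved because each such vertex loses $\mu$ on exactly one incoming and one outgoing edge; at least one edge drops to zero in each step; and since $\Gamma^*$ is finite the procedure terminates. The key point is that such a positive-weight $s^*\to t^*$ path exists whenever any edge has positive weight: starting from such an edge and following positive-weight edges backward, balance forces $I_v>0$ whenever $O_v>0$ at every non-$s^*$ vertex, while acyclicity prevents revisiting, so the path must terminate at $s^*$; a symmetric argument forward reaches $t^*$. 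The procedure yields paths $Q_1,\ldots,Q_K$ in $\Gamma^*$ with weights $\mu_1,\ldots,\mu_K>0$ such that $w_e=\sum_{k:\,e\in Q_k}\mu_k$ for every edge $e$ of $\Gamma^*$.

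Finally, I let $P_k$ be $Q_k$ with its first and last edges removed, so that $P_k$ is a directed path in $\Gamma$ with start vertex $s_k$ (the second vertex of $Q_k$) and end vertex $f_k$ (the second-to-last vertex of $Q_k$). The identity $w_e=\sum_{k:\,e\in P_k}\mu_k$ for $e\in E$ is immediate. The total weight is $\sum_k\mu_k = D = \tfrac12\sum_v|I_v-O_v|\le \tfrac12\sum_v\eps_v$, which yields \eqref{eq:sum-of-muk} (the strictness in the proposition appears to be a typo, since equality is forced when $|I_v-O_v|=\eps_v$ for all $v$). For a fixed $v\in V$, the paths with $s_k=v$ are precisely those $Q_k$ whose first edge is $(s^*,v)$, contributing total weight $(O_v-I_v)^+$; paths with $f_k=v$ similarly contribute $(I_v-O_v)^+$. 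Their sum equals $|I_v-O_v|\le \eps_v$, giving \eqref{eq:sum-of-muk-on-a-vertex}. The final clause follows because if $I_v\le O_v$ then no edge $(v,t^*)$ was added, so no $P_k$ can end at $v$; the symmetric statement for starting vertices is analogous.

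The main obstacle I foresee is the flow-decomposition loop — concretely, the argument that a positive-weight $s^*\to t^*$ path exists whenever any positive-weight edge remains. This is where the two features of $\Gamma^*$ — acyclicity and vertex-wise balance — are used in tandem; once this is in place, the extraction of the $P_k$'s and the verification of the stated bounds are just bookkeeping.
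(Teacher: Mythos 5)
Your proof is correct, and it takes a genuinely different route from the paper's. The paper argues by induction on the number of edges: it picks an edge $e_*$ of minimal weight $w_*$, extends it to a maximal path $P$ (whose two endpoints, by maximality and near-balance, have slack at least $w_*$), subtracts $w_*$ along $P$, and recurses on the resulting graph, which has at least one fewer edge and a reduced imbalance vector. Your super-source/super-sink construction instead converts near-balance into exact balance and then runs the classical flow-decomposition loop on the augmented acyclic graph; the bounds \eqref{eq:sum-of-muk} and \eqref{eq:sum-of-muk-on-a-vertex} then come out of exact bookkeeping --- each extracted path consumes super-edge weight at both ends, and the super-edge weights are exactly the imbalances --- rather than being propagated through an induction. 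What your approach buys is that $\sum_k \mu_k$ is computed exactly as $\tfrac12\sum_v|I_v-O_v|$, and the final clause of the proposition (which vertices can serve as $s_k$ or $f_k$) is immediate from which super-edges were added, whereas in the paper it requires a separate observation about when a maximal path can start or end at a vertex. Two small remarks. First, you are right to flag the strictness: with $\eps_v=|I_v-O_v|$ both \eqref{eq:sum-of-muk} and \eqref{eq:sum-of-muk-on-a-vertex} hold only with $\le$, and indeed the paper's own base case ($|V|=2$, one edge) produces equality and its induction hypothesis is stated with $\le$; only the non-strict versions are used in the application, so this is a defect of the statement rather than of your argument. Second, the identity ``their sum equals $|I_v-O_v|$'' tacitly uses that no $Q_k$ has both $s_k=v$ and $f_k=v$; this is worth a sentence, and it holds because the super-edges $(s^*,v)$ and $(v,t^*)$ are never both present (their existence conditions $O_v>I_v$ and $I_v>O_v$ are mutually exclusive), which also guarantees that every truncated path $P_k$ retains at least one edge of $\Gamma$.
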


\begin{rem}
	Note that \eqref{eq:sum-of-muk-on-a-vertex} implies \eqref{eq:sum-of-muk}, which can be seen by summing the inequalities in \eqref{eq:sum-of-muk-on-a-vertex}  over all $v\in V$. Then the right hand side becomes $\sum_{v } \eps_v $, while the left hand side is  
	\[  \sum_{v} \sum_{\{k: s_k = v~{\rm or} f_k = v \}} \mu_k, \]
	and since every path has precisely one starting point $s_k$ and one final point $f_k$, we get that each $\mu_k$ was summed twice, that is we get exactly $2\sum \mu_k$.  
\end{rem}

\begin{proof}[Proof of Proposition \ref{prop:combi-splitADGtopaths}]
	Note that if $|V| = 2$ then the claim is trivial as we can use just one path. We then have that $\eps_1 = \eps_2 = w_{(1,2)} = \mu_1$, where we used $1$ and $2$ as labels of the vertices, and assumed the only edge is $(1,2)$.
	
	We shall use induction on the number of edges. If $V$ is any set and $|E| = 1$ then the situation is exactly as in the first case we considered and there is nothing to prove.

	Assume we know the claim for $|E|<k$ and we are given a graph $\Gamma$ with $|E| = k$. Consider an edge $e_* = (x,y)$ with minimal weight $w_*$ and pick a maximal path $P$ which includes it (maximal in the sense that it cannot be extended to a longer path), say $s = v_{1}\to v_{2} \to \cdots \to v_{m}=f$. Maximality implies that there is no outgoing edge from its end vertex $f = v_{m}$, and no edge going into its start vertex $s = v_{1}$. In particular, the ``almost balanced'' restriction on $s$ reads  
	$\sum_{(s,y)\in E}w_{(s,y)} < \eps_{s}$ and  on $f$ reads  
	$\sum_{(x,f)\in E}w_{(x,f)} < \eps_{f}$. 
	Moreover, since $w_*$ was a minimal weight in the whole graph, it follows that $\eps_{s}> w_*$ and $\eps_{f} >w_*$.

	Define $\Gamma'$ to be a graph with the same vertices $V$ and edges $e\in E$, whose weights are defined as 
	\begin{align*}
	w'_e=
	\begin{cases}
	w_e-w_*  \ \ \ \ \ \ \ \ \ \ &\text{ if  $e\in P$,}\\
	w_e \ \ \ &\text{ otherwise.}
	\end{cases}
	\end{align*}
	Since $w_*$ was chosen as the minimal weight, we see that all new weights remain non-negative. Note that the edge $e_* = (x,y)$ now has weight zero and thus can be omitted. Therefore, the graph $\Gamma'$ is a directed weighted acyclic graph with at most $k-1$ edges. It satisfies the almost-balanced condition with a new vector $\eps'_v$ given by
	\[ \eps'_{s} = \eps_{s}- w_*, \,\,\, \eps_{f}' = \eps_{f}-w_*,\,\,\,{\rm and}\,\,\, \eps'_v = \eps_v \,\,\,{\rm for} \,\,\,v\in V\setminus \{s,f\}.\]
	Note that  $\sum_{v\in V}\eps_v' = \sum_{v\in V} \eps_v-2w_*$. 
	
	By the induction assumption, the new graph $\Gamma'$ has a weighted decomposition: that is, we can find paths $(P_k)_{k\in S}$ and weights $\mu_k$ such that 
	$\sum_{\{k:\, e \, \in P_k\}}\mu_k = w'_{e}$ and for every $v\in V$, we have
	\[\sum_{\{k:\, s_k = v \text{ or } f_k = v\}} \mu_k \le  \eps'_v. \]

	We add to the collection the path $P$ with a weight $w_*$ on each edge. 
	We claim that this constitutes the desired weighted decomposition of $\Gamma$. 
	
	Indeed, if we compute $\sum_{\{k:\, s_k = v \text{ or } f_k = v\}} \mu_k $ for a vertex which is neither $s$ nor $f$, i.e. not an end point of $P$, we get the same result as in $\Gamma'$ and hence it is at most $\eps_v' = \eps_v$. If we compute the sum for $v = s$ or $v = f$, we get the sum in $\Gamma'$ with added $w_*$, which is thus bounded by $\eps_v'+w_*=\eps_v$, as needed. 
	
	Finally, by construction, a vertex can be chosen as a starting vertex $s_k$ for some path $P_k$ only if, after equal weights were removed from its   inwards   and  outwards  pointing edges, there was no weight left in the inwards  pointing edges. In other words, only if   $\sum_{(x,v)\in E}w_{(x,v)}<\sum_{(v,y)\in E}w_{(v,y)}$. Similarly, a vertex can be chosen as $f_k$ for some path $P_k$ only if $\sum_{(x,v)\in E}w_{(x,v)}>\sum_{(v,y)\in E}w_{(v,y)}$, which completes the proof.
\end{proof}

\subsubsection{A result of Ky Fan}

For the proof, we use the following result of Ky Fan \cite{KyFan}:

\begin{thm}[Ky Fan]\label{thm:KyFan}
	Let $E$ be a locally convex, real Hausdorff vector space. Let $x_\nu \in  E$ be an indexed set of vectors with indices $\nu\in I$, and let $\alpha_\nu\in \RR$. Then the
	system of inequalities 
	$f(x_\nu)\ge \alpha_\nu$ for $\nu \in I$ has a solution $f\in E^*$ (that is, $f$ a continuous linear functional
	on $E$) is and only if the point $(0,1)\in E\times \RR$ does not belong to the closed convex cone $C \in E\times \RR$ spanned by the elements $\{(x_\nu, \alpha_{\nu}): \nu \in I\}$. 
\end{thm}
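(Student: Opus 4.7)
The plan is to prove Ky Fan's theorem by a Hahn--Banach separation argument carried out in the product space $E \times \RR$, where the extra coordinate records the ``slack'' in each inequality.

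For the ``only if'' direction, suppose $f \in E^*$ satisfies $f(x_\nu) \ge \alpha_\nu$ for all $\nu$. Define the continuous linear functional $F(x,t) = f(x) - t$ on $E \times \RR$. Then $F(x_\nu, \alpha_\nu) = f(x_\nu) - \alpha_\nu \ge 0$ for every generator, hence $F \ge 0$ on every non-negative linear combination of generators, and by continuity $F \ge 0$ on the closed convex cone $C$. Since $F(0,1) = -1 < 0$, the point $(0,1)$ cannot lie in $C$.

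For the ``if'' direction, assume $(0,1) \notin C$. Because $E \times \RR$ is locally convex Hausdorff, $C$ is closed and convex, and $\{(0,1)\}$ is compact and convex, the geometric Hahn--Banach theorem produces $\Phi \in (E\times \RR)^*$ and $\beta \in \RR$ with
\[ \Phi(0,1) < \beta \le \inf_{(x,t)\in C}\Phi(x,t). \]
Since $0 \in C$ we obtain $\beta \le 0$; and since $C$ is a cone, any negative value of $\Phi$ on $C$ can be scaled to send $\inf_C \Phi$ to $-\infty$, so $\Phi \ge 0$ on $C$ and $\beta = 0$. Writing $\Phi(x,t) = g(x) + ct$ with $g \in E^*$ and $c \in \RR$, we have $c = \Phi(0,1) < 0$; dividing by $|c|$ we may take $\Phi(x,t) = f(x) - t$. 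Evaluating on each generator $(x_\nu, \alpha_\nu) \in C$ gives $f(x_\nu) - \alpha_\nu \ge 0$, which is the desired system.

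The only genuinely delicate point is showing that the separating hyperplane is not ``vertical'' in the $\RR$-direction, i.e.\ that the coefficient $c$ of $t$ is nonzero (in fact negative). This is exactly what \emph{strict} separation of the point $(0,1)$ from the closed cone $C$ gives, and it is where local convexity of $E$ enters crucially. Everything else -- the reduction to $\beta = 0$ using the cone structure, and the rescaling to $c = -1$ -- is routine normalization.
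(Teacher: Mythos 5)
Your proof is correct and follows essentially the same route as the paper's: the easy direction via the functional $(x,t)\mapsto f(x)-t$, and the converse by Hahn--Banach strict separation of the point $(0,1)$ from the closed convex cone, using $0\in C$ and the cone structure to force the separating functional to be nonnegative on $C$ and strictly negative at $(0,1)$, then normalizing the $\RR$-coefficient to $-1$. No changes needed.
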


Since the proof is a direct application of Hahn Banach theorem, we include it here for completeness. 

\begin{proof}[Proof of  Theorem \ref{thm:KyFan}]
	Denote by $C$  the closed convex cone spanned by $(x_\nu, \alpha_\nu)$.  
	Assume the system has a solution $f$, then $f\times( -{\rm id_{\RR}})\ge 0$ on $C$. On the other hand $f\times( -{\rm id_{\RR}})(0,1) = -1$, so $(0,1) \not\in C$. 
	
	For the other direction, we use the Hahn-Banach theorem to separate $(0,1)$ from the closed convex cone $C$. That is, we find a linear functional $(h, a)\in E^* \times \RR$ such that for any $(x,\alpha) \in C$ one has $h(x) + a\alpha \ge b > h(0) +a  = a$.
	Since $C$ is a cone, $(0,0) \in C$ and so $0\ge b > a$. In particular we see that $h(x) + a \alpha \ge b$ can be written as $f(x) -\alpha \ge b/(-a) \ge 0$ for $f = h/(-a)\in E^*$. That is, we have found a solution $f$ satisfying $f(x_\nu) \ge \alpha_\nu$ for all $\nu$.    
\end{proof}

\subsubsection{Completing the  proof in the case of $I$ at most countable}

We are ready to prove Theorem \ref{thm:linear-ineq-system-weaker}, in the case where the family of inequalities is at most countable. 

\begin{proof}[Proof of Theorem \ref{thm:linear-ineq-system-weaker}, countable $I$]\label{pf:countable}
	We will use Theorem \ref{thm:KyFan} for 
	the space $\RR^I$ with the box topology. It is clearly Hausdorff and locally convex. 
	The convex cone is generated by the vectors $(e_i - e_j, a_{i,j})$, and so we must show that 
  the point $(0,1)$ has a neighborhood separated from this cone.  

	The neighborhood we pick is of the form $\prod_{i\in I} (-\eps_i, \eps_i) \times (1/2, \infty)$, where the sequence $\eps_i$ will be chosen in a way which depends only on $\{a_{i,j}\}_{i,j\in I}$.

To  define the neighborhood, using that $I$ is countable we fix an ordering $\le$ of it, and for every $i\in I$ we define
	\[ \eps_i = \frac{1}{5} 2^{-i} \frac{1}{\max \{  a_{k,j} : k\le i, j\le i\} +1}. \]
	Note that $\eps_i>0$ for every $i$.

	Then, towards a contradiction, assume that the set $\prod_{i\in I} (-\eps_i, \eps_i) \times (1/2, \infty)$, which is an open neighborhood of $(0,1)\in \R^I \times \R$ in the box topology, intersects with the closed convex cone generated by the vectors $(e_i - e_j, a_{i,j})$. 
	This means that there is some finite $J\subset I$, and a positive combination $\sum_{i,j\in J} \lambda_{i,j} (e_i - e_j, a_{i,j})$ which is inside this neighborhood. This condition amounts to 
	\begin{equation}\label{eq:cone-intersects-neighII}\sum_{j\in J} \lambda_{i,j} - \lambda_{j,i} \in (-\eps_i, \eps_i)\,\,\,\forall i\in J \quad {\rm and} \quad \sum_{i,j\in J} \lambda_{i,j}a_{i,j} \ge 1/2.   \end{equation} 
	
	Let $\Lambda$ be the matrix with entries $\lambda_{i,j}$. Note that subtracting any positive multiple of a permutation matrix from the matrix 
	$\Lambda$ has no effect on the sum on the left and only increases (by cyclic monotonicity) the sum on the right. Thus we may assume without loss of generality that the matrix $\Lambda$ is not larger (entry-wise) than any positive multiple of a permutation matrix.
	
	Consider the elements of $J$ as vertices of a weighted directed graph $\Gamma$, where we define the weights to be $w_{(i,j)} = \lambda_{i,j}$. The assumption that $\Lambda$ contains no positive multiple of a permutation matrix implies that the graph $\Gamma$ is acyclic. Moreover, the first condition in \eqref{eq:cone-intersects-neighII} means that the graph $\Gamma$ is almost balanced, up to the weights $(\eps_i)$, in the sense of Proposition 
	\ref{prop:combi-splitADGtopaths}. Using Proposition 
	\ref{prop:combi-splitADGtopaths}  we find paths $P_k = v_{i_1}^{(k)} \to v_{i_2}^{(k)} \to \cdots \to v_{i_{m_k}}^{(k)}$ and weights $\mu_k$  that cover the graph $\Gamma$ and satisfy for every vertex $v_i$ that
	\[  \sum_{\{k:\, s_k = v_i \text{ or } f_k = \,v_i\}} \mu_k \le \eps_i  \]
	(where we let $s_k = v_{i_1}^{(k)}$ and $f_k = v_{i_{m_k}}^{(k)}$ as before). 
	
	Denote by $A _k$ the adjacency matrix of the path $P_k$, so that $\Lambda = \sum \mu_k A_k$. Then
	\[ \sum_{i,j\in J} \lambda_{i,j}a_{i,j}  = \sum_{k} \mu_k \sum_{i,j\in J} (A_k)_{i,j} a_{i,j}.\] 
	Moreover, by the assumption on $a_{i,j}$ in the statement of the theorem we are proving, and since $(A_k)_{i,j}\in \{0,1\}$ and are indicating a path, we get that 
	\[\sum_{i,j\in J} (A_k)_{i,j} a_{i,j}\le a_{s_k, f_k}.\]
	Therefore,
	\[ \sum_{i,j\in J} \lambda_{i,j}a_{i,j} \le \sum_{k} \mu_k a_{s_k, f_k}=: S.\]
	Let us now decompose the sum $S$ according to the start and end vector of the path $P_k$ in the following way. Fix an ordering of the (finite number of) elements in $J$, and write
	\[  S= \sum_{l=1}^{|J|} \sum_{\{ k:\, \max (s_k, f_k) = l  \}} \mu_k a_{s_k, f_k}. \]
	Indeed, each path is summed exactly once, according to the quantity $l = \max (s_k, f_k)$.

	From the definition of $\eps_i$, it follows that  
	\[ a_{k,j} \le   \frac{1}{5\eps_{\max\{j,k\}} } 2^{-{\max\{j,k\}}}. \] 
	Using this estimate for the sum $S$, we get 
	\[  S\le  \sum_{l=1}^{|J|} \frac{1}{5\eps_{l} } 2^{-{l}} \sum_{\{ k: \, \max (s_k, f_k) = l  \}} \mu_k  . \]
	Recall that the set $\{k: \max (s_k, f_k) = l  \}$ is  the set of all paths which either start or terminate at $l$, but their other endpoint (start or end point) is smaller than $l$. From Proposition \ref{prop:combi-splitADGtopaths}   we know that 
	\[  \sum_{\{k: \, s_k = l~{\rm or}~f_k = l \}} \mu_k \le \eps_l,  \]
	so in particular
	\[  \sum_{\{k:\, \max(s_k ,f_k) = l \}} \mu_k \le \eps_l.  \]
	We thus may conclude that
	\[  S\le  \sum_{l=1}^{\infty} \frac{1}{5\eps_{l} } 2^{-{l}} \eps_l  = 1/5<1/2\]
	which is a contradiction to the assumption and the proof is complete.  
\end{proof}

\subsection{The uncountable case} 
In this section, we prove the remaining case of Theorem \ref{thm:linear-ineq-system-weaker}, namely when the index set $I$ is uncountable. To describe the extra condition  assumed in this case, it is convenient to use the notion of a ``black hole'' for a system of inequalities.

\begin{definition}\label{def:black-hole}
Consider a collection of numbers $\{ a_{i,j} \}_{i,j\in I} \in [-\infty, \infty )$, where $I$ is an index set. 
We say that the collection $\{a_{i,j}\}_{i,j\in I}$ has a \emph{black hole} in the index set $J_0\subset I$ 
if for all $j\in J_0$ and all $i\in I \setminus J_0$ we have that $a_{j,i}=-\infty$.
We say that it has a black hole of infinite 
cardinality if such $J_0$ exists and is of infinite 
cardinality. 
\end{definition}

The assumption $(b)$ in Theorem \ref{thm:linear-ineq-system-weaker} is that the system of inequalities does not have an infinite black hole.

\begin{rem}\label{rem:biggest-hole}
    By definition, if $J_\alpha\subset I$ are black holes for the system $\{ a_{i,j} \}_{i,j\in I} \in [-\infty, \infty )$ for any $\alpha \in A$ then so is $J=\cup_{\alpha \in A} J_\alpha$. This means that one can take a maximal black hole $J\subseteq I$ by taking the union over all black holes, and this $J$ includes, as a subset, any black hole of any cardinality. 
\end{rem}

\begin{proof}[Proof of Theorem \ref{thm:linear-ineq-system-weaker}, uncountable case]

We start by letting $J_1$ be the union of all black holes in $I$. By Remark \ref{rem:biggest-hole}, the set $J_1$ is a black hole and by the \emph{added} assumption $J_1$ is finite. We take any countably infinite set $J_2\subseteq I$ which includes it. 
Then the system of inequalities indexed by $J_2$ has a solution due to Theorem \ref{thm:linear-ineq-system-weaker}. Denote this solution by $f_2$. 

	We shall now use Zorn's Lemma.  
	Consider the partially ordered set of pairs $(J, f_J)$ where $J_2 \subset J \subset I$ and $f_J : J\to \RR$ satisfies $f_J|_{J_2} = f_2$, and such that for any $i,j\in J$ we have $f_J(i)-f_J(j) \ge a_{i,j}$. 
	We know the set is non-empty because it contains the pair $(J_2, f_2)$. 
	The partial order we consider is $(J, f_J) \le (K, f_K)$ if $J\subset K$ and $f_K|_J = f_J$.
	
	First, let us notice  that every chain has an upper bound. Assume $(J_\alpha, f_{{J_\alpha}})_{\alpha \in A}$ is a chain (namely any two elements are comparable). Consider $J = \cup_\alpha J_\alpha$ and $f_J = \cup_\alpha f_{J_\alpha}$. This function is well defined because of the chain properties (at a point $i\in J$ it is defined as $f_{J_\alpha}(i)$ for any $\alpha$ with $i\in J_{\alpha}$). 
	The pair $(J, f|_J)$ is in our set because if $i,j\in J$ then for some $\alpha$ we have $i,j \in J_\alpha$, so $f|_{J_\alpha}$ satisfies the inequality on $f_J(i)-f_J(j) \ge a_{i,j}$ and so does $f_J$. 
	Finally, $(J, f_J)$ is clearly an upper bound for the chain.  So, we have shown that every chain has an upper bound, and we may use Zorn's lemma to find a maximal element. Denote the maximal element by $(J_0, f_{J_0})$.

	Assume towards a contradiction that $J_0 \neq I$. Note that the non-empty set $I\setminus J_0$ has no black holes since we assumed that $J_2 \subset J_0$ contains all the black holes in $I$. Therefore, there is some $i_0\in I\setminus J_0$ and some $j_{i_0}\in J_0$ with $a_{i_0,j_{i_0}}\neq -\infty$. 
    
     If we are able to extend $f_{J_0}$ to be defined on $\{i_0\}$ in such a way that all inequalities with indices of the form $(i_0, j)$ and $(j, i_0)$ with  $j\in J_0$  still hold, we will contradict maximality and complete the proof.
	
First, recall that under our assumptions $a_{k,j} \ge    a_{k, i_0} +a_{i_0,  j}$. Moreover, since $f_{J_0}$ already satisfies the inequality  
	$a_{k,j}\le    f_{J_0}(k) -f_{J_0}(j)$ for all $j,k \in J_0$, we get  that 
	\[ a_{k, i_0} +a_{i_0,  j} \le  a_{k,j}\le    f_{J_0}(k) -f_{J_0}(j)\] 
	holds for all $j,k \in J_0$. In particular, this gives us
	\[  a_{i_0,  j} + f_{J_0}(j) 
	\le    f_{J_0}(k) - a_{k, i_0}\]
	for any $j,k\in J_0$.
	This means that $f_{J_0}$ must satisfy that 
	\begin{equation}\label{eq:whatZORN} \sup_{j\in J_0}  \left(a_{i_0,  j} + f_{J_0}(j)\right) 
	\le \inf_{j\in J_0} \left(f_{J_0}(j) - a_{j, i_0}\right).\end{equation}
	
	 In particular, since we chose $i_0$ so that there exists $j_{i_0}\in J_0$ with $a_{i_0,j_{i_0}}\neq -\infty$ we know that the supremum is not $-\infty$, and therefore, the infimum is not $-\infty$. We will now show that  $\inf_{j\in J_0} \left(f_{J_0}(j) - a_{j, i_0}\right)$ is not $+\infty$, from which we will conclude that both the infimum and supremum are finite. 
    
To this end, we will show that for all $i\in I\setminus J_0$ there is some $j\in J_0$ such that $a_{j,i}\neq -\infty$.    Let $J_3 \subseteq I\setminus J_0$
denote all those $i\in I\setminus J_0$ for which there is some $j_i\in J_0$ with $a_{j_i,i}\neq -\infty$. We claim that $J_3 = I\setminus J_0$. Towards a contradiction,  assume that $I\setminus (J_0 \cup J_3)\neq \emptyset$. Then, as $J_0\cup J_3$ is not a black hole (since it has infinite cardinality), there is some $k\in J_0\cup J_3$ such that $a_{k,l}\neq -\infty$ for some $l \in I\setminus(J_0\cup J_3)$. The fact that $l\not\in J_3$ means $k\in J_3$ (and not in $J_0$). However, since $k\in J_3$ there is some $j_k\in J_0$ with $a_{j_k,k}\neq -\infty$. Together with our assumption that $a_{i_1,i_3} \ge    a_{i_1, i_2} +a_{i_2,  i_3}$ for any indexes $i_1, i_2, i_3 \in I$, this means 
\[ 
a_{j_k,l} \ge a_{j_k,k} + a_{k,l}>-\infty 
\]
 in contradiction to the fact that $l\not\in J_3$. Hence, as claimed, for all $i\in I\setminus J_0$ there is some $j\in J_0$ such that $a_{j,i}\neq -\infty$. In particular, this is true for $i=i_0$ which we chose before.

 We conclude that both sides of the inequality \eqref{eq:whatZORN} are finite, and hence we may take $f(i_0)\in \RR$ such that 
	\[ \sup_{j\in J_0}  \left(a_{i_0,  j} + f_{J_0}(j)\right) 
	\le  f(i_0) \le \inf_{j\in J_0} \left(f_{J_0}(j) - a_{j, i_0}\right).\]
	
This means that we may extend the function $f_{J_0}$ to $i_0$, which is a contradiction to the maximality, and we conclude that $J_0 = I$. This finished the proof as we have found a solution to the full system of inequalities.  
\end{proof}

\subsection{Summary}\label{subsec:summanry}

\begin{proof}[Proof of Theorem \ref{thm:Non-tradRR}]
	Assume that $G\subset X\times Y$ is $c$-path-bounded and has no infinite black holes (see Definition \ref{def:black-hole-cost}). By Theorem \ref{thm:potential-means-inequalities} we needed to show that the family of inequalities 
	\[ c(x,y) - c(z,y) \le \varphi(x) - \varphi(z), \]
	where $(x,y), (z,w)\in G$, has a solution. By Theorem \ref{thm:linear-ineq-system-general}, as $G$ is  $c$-path-bounded, condition (a) holds for $a_{x,z} = c(x,y) - c(z,y)$. Finally, since $G$ is either countable (in which case condition (b) holds) or does not have infinite black holes, in which case for any infinite subset $G_0 \subset G$, there is some $(x,y)\in G_0, (w,z)\in  G\setminus G_0$ with $c(z,y)<\infty$, i.e. with $a_{x,z} = c(x,y) - c(z,y)>-\infty$, so that condition (b) holds. Therefore, a solution to the above family exists.
\end{proof}

\subsection{Rockafellar-Rochet-R\"uschendorf theorem}

As a corollary we have a new and simple proof for the real-valued Rockafellar type theorem for traditional costs.  

\begin{cor}[Rockafellar-Rochet-R\"uschendorf]\label{cor:RRrvCOST}
	Let $c: X \times Y \to \RR$ be a traditional (i.e. finitely valued) cost function. Assume we are given a set $G \subset X\times Y$ which is $c$-cyclically monotone. Then there exists a $c$-class function $\varphi: X\to [-\infty, \infty]$ such that $G \subset \partial^c\varphi$. 	
\end{cor}

 \begin{proof}
 	In order to apply Theorem \ref{thm:Non-tradRR} we proceed to show that $G$ is $c$-path-bounded, as $G$ clearly has no black holes. Let
 	 $(x,y),(z,w)\in G$, and  let $M :=M((x,y), (z,w)) = c(x,w)-c(w,z)$. To show this indeed satisfies the condition, namely that there is an upper bound on the total cost of any path, we use the $c$-cyclic monotonicity. Let $m\ge 2$ and let $(x_2, y_2), \ldots, (x_{m-1}, y_{m-1}) \in G$. Denote $(x, y) = (x_1, y_1) $ and $(z,w) = (x_m,y_m)$.  
 	The condition of $c$-cyclic monotonicity implies that
 	\[  \sum_{i=1}^{m-1}\left( c(x_i, y_i) - c(x_{i+1}, y_i) \right) + c(x_m, y_m) - c(x_1, y_m) \le 0.   \]
 	In particular, 
 	\[ 	  \sum_{i=1}^{m-1}\left( c(x_i, y_i) - c(x_{i+1}, y_i) \right) \le c(x,w)-c(z,w). \] 
 \end{proof}

 It is important to note that we relied heavily on the fact that $c(x,w) <\infty$, otherwise this upper bound might be infinite, and therefore meaningless.

 \section{Special cases}\label{sec:sc}
  
  We now show that under certain assumptions, $c$-path-boundedness is implied by $c$-cyclic monotonicity, even for non-traditional costs. This is motivated by the fact  
   that usually, when considering \emph{optimal} transport plans, they are concentrated on sets which are automatically $c$-cyclically monotone (see \cite{pratelli,optimal-and-better}).  So, it is useful to indicate cases in which the 
$c$-cyclical monotonicity property implies that the set is also $c$-path-bounded. 
  
In this section we collect several such results. %
Recall the equivalence relation $\sim$ described in Subsection \ref{subsec:known-results}, and note that if the points $ (x_s,y_s)$ and $(x_f,y_f)$ are in the same equivalence class, then  there is a constant  $M$ as required in Theorem \ref{thm:Non-tradRR}. Indeed, fix an arbitrary path  from  $(x_f, y_f)$ to
$(x_s, y_s)$, say $(z_1, w_1), \ldots, (z_k, w_k)$, the edges of which are graph edges, and let \[ M = -\left[ c(x_f, y_f) - c(z_1, y_f) + \sum_{j=1}^{k-1} \big(c(z_i, w_i) - c(z_{i+1}, w_i)\big) + c(z_k, w_k) - c(x_s, w_k)\right] <\infty.\] 

Any path from $(x_s, y_s)$ to $(x_f, y_f)$ (paths not on the graph have total cost $-\infty$) is completed to a cycle using the above path, and using $c$-cyclic monotonicity we have for any $m$ and any $(x_i, y_i)_{i=2}^m\subset G$
\begin{eqnarray*}  c(x_s, y_s) - c(x_2, y_s) + \sum_{i=2}^{m-1} \big(c(x_i, y_i) - c(x_{i+1}, y_i)\big) + c(x_m, y_m) - c(x_f, y_m)  \le M .\end{eqnarray*} 

Moreover, if all the points are in one equivalence class then, clearly, there are no black holes. Summarizing, we gave a proof of Proposition \ref{prop:1equivClassIMPLIEScpathbdd}, since by Theorem \ref{thm:Non-tradRR} a $c$-path-bounded set with no black holes admits a $c$-potential.

  \begin{cor}\label{cor:ContConnect}
 	Let $c: X\times Y \to (-\infty,  \infty]$   be a continuous cost function on separable metric spaces $X,\,Y$  and let $G\subset X \times Y$ be $c$-cyclically monotone and  {path} connected.  Let $T>0$  and assume $G$ satisfies that   $c(x,y) \le T$ for all $(x,y)\in G$. Then $G$ is $c$-path-bounded. 
 \end{cor}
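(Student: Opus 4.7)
The plan is to reduce to Proposition \ref{prop:1equivClassIMPLIEScpathbdd}, so it suffices to show that under the stated hypotheses the equivalence relation $\sim$ on $G$ has a single class. Fix $(x,y), (z,w)\in G$ arbitrary; by path connectedness there is a continuous curve $\gamma:[0,1]\to G$, say $\gamma(t)=(x_t,y_t)$, with $\gamma(0)=(x,y)$ and $\gamma(1)=(z,w)$. The goal is to produce directed paths from $(x,y)$ to $(z,w)$ and back in the graph whose edges $(a,b)\to(a',b')$ record the condition $c(a',b)<\infty$.

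The crucial observation is that although $c$ can take the value $+\infty$, the bound $c(x_t,y_t)\le T$ keeps $\gamma$ in the locus where $c$ is finite. Hence at every $\gamma(t)$ the continuity of $c$ is genuine continuity to a finite value, so I can find $\delta_t>0$ such that $(x_s,y_{s'})$ stays in a small neighborhood of $(x_t,y_t)$ whenever $|s-t|,|s'-t|<\delta_t$, which forces both $c(x_s, y_{s'})<T+1$ and $c(x_{s'}, y_s)<T+1$. A Lebesgue-number/compactness argument on $[0,1]$ then gives a uniform $\eta>0$ and a partition $0=s_0<s_1<\cdots<s_m=1$ with $s_{i+1}-s_i<\eta$ such that for each $i$ the two indices $s_i,s_{i+1}$ lie in a common $(t-\delta_t,t+\delta_t)$.

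From such a partition I read off both edges $\gamma(s_i)\to\gamma(s_{i+1})$ (since $c(x_{s_{i+1}},y_{s_i})<\infty$) and $\gamma(s_{i+1})\to\gamma(s_i)$ (since $c(x_{s_i},y_{s_{i+1}})<\infty$). Concatenating in the two orders yields directed paths from $(x,y)$ to $(z,w)$ and from $(z,w)$ to $(x,y)$, so $(x,y)\sim(z,w)$. Since $(x,y),(z,w)$ were arbitrary, $\sim$ has a single equivalence class, and Proposition \ref{prop:1equivClassIMPLIEScpathbdd} finishes the proof.

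The only delicate step is the first: one must control the off-diagonal costs $c(x_s,y_{s'})$ with $s\neq s'$, not merely $c(x_t,y_t)$ along the curve. This is handled automatically by joint continuity of $c$ together with continuity of both coordinate projections $t\mapsto x_t$ and $t\mapsto y_t$ of $\gamma$; without the uniform upper bound $T$ (or some substitute ensuring finiteness along $\gamma$) this step would collapse, so the hypothesis is used essentially here.
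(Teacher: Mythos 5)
Your proof is correct and takes essentially the same route as the paper's: reduce to Proposition \ref{prop:1equivClassIMPLIEScpathbdd} by showing $\sim$ has a single class, then discretize a connecting curve $\gamma$ finely enough that the cross terms $c(x_{s_{i+1}},y_{s_i})$ stay finite, where the paper obtains this from the positive distance of the compact set $\gamma([0,1])$ to the closed set $\{c=\infty\}$ and you use local neighborhoods with $c<T+1$ plus a Lebesgue-number argument. The only minor inaccuracy is your closing remark: finiteness of $c$ along $\gamma$ already follows from $c$-cyclic monotonicity (which by definition requires $c<\infty$ on $G$), so the uniform bound $T$ is not what makes that step work.
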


  \begin{proof}
  It is enough to show that any two points $(x,y),(z,w)\in G$ satisfy $(x,y)\sim (z,w)$. 
By  {path} connectivity we may find a continuous $\gamma:[0,1]\to G$ with $\gamma(0) = (x,y)$ and $\gamma(1) = (z,w)$. The compact set $\gamma([0,1])$ is of positive distance to the closed (by continuity of $c$) set $\{ (x,y): c(x,y) = \infty\}$. In particular, there exists some $\delta>0$ such that if $|(\eta,\xi) - \gamma(t)|<\delta$ then, denoting $\gamma(t)  = (x_t, y_t)$, we have $c(x_t,\xi)<\infty$. 
Since $\gamma$ is uniformly continuous, we may find $m$ and $t_0 = 0 <t_1<\cdots <t_m = 1$ such that   
$|\gamma(t_j) - \gamma(t_{j-1}) |<\delta$ for $j = 1, \ldots, m$. Denote $\gamma(t_j) = (x_j, y_j)$. 
Since $|(x_{j+1}, y_j) - (x_j, y_j)| \le  |(x_{j+1}, y_{j+1}) - (x_j, y_j)|<\delta $, we get that   $c(x_{j+1}, y_j)<\infty$, which means that the path $(\gamma(t_j))_{j=1}^m$ connects the points $(x,y)$ and $(z,w)$ in the graph. By symmetry, we get that any two points are connected and there is only one equivalence class for the relation $\sim$. Applying Proposition \ref{prop:1equivClassIMPLIEScpathbdd}, the proof is complete. 
   \end{proof}

As may be apparent from the proof of the corollary, the connectedness of $G$ is not elemental, and we may replace it with other assumptions, so long as these imply that there is only one equivalence class for $\sim$. Another useful variant, in which there may be more then one equivalence class, is the following.

\begin{prop}\label{prop:T-bdd}
	Let $c: X\times Y \to  (-\infty,\infty]$  be a continuous cost function on separable metric spaces $X,\,Y$   and  let $G\subset X \times Y$ be $c$-cyclically monotone and bounded.  Let $T>0$  and assume that for every $(x,y)\in G$ we have that $c(x,y)\le T$. Then $G$ is $c$-path-bounded. 
\end{prop}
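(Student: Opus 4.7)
The plan is to adapt the argument of Corollary~\ref{cor:ContConnect}, replacing the path-connectedness of $G$ with the compactness of its closure $\bar G$ afforded by the boundedness hypothesis. Specifically, since $\{c\le T\}$ is closed by continuity of $c$ into $(-\infty,\infty]$, we have $\bar G\subset\{c\le T\}$, and in a locally compact ambient space (such as $\RR^n\times\RR^n$), $\bar G$ is compact. The closed set $\{c=\infty\}$ is then at positive distance $2\delta$ from $\bar G$, so $c$ is finite on the $\delta$-neighborhood of $\bar G$; moreover, the lower semi-continuity of $c$ and compactness of $\overline{P_X G}\times\overline{P_Y G}$ yield a lower bound $c\ge -L$ on this compact product.

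Next, I would cover $\bar G$ by finitely many open product sets $V_1, \ldots, V_N$ of the form $V_k = U_k\times W_k$, each of diameter less than $\delta$ and centered at a point of $\bar G$. The crucial observation is that if $(x,y)$ and $(x',y')$ both lie in $G\cap V_k$ for some $k$, then the ``cross'' points $(x,y')$ and $(x',y)$ also lie in $V_k\subset\{c<\infty\}$, so the graph edges $(x,y)\to(x',y')$ and $(x',y')\to(x,y)$ both exist and hence $(x,y)\sim(x',y')$ in the sense of the equivalence relation defined before Proposition~\ref{prop:1equivClassIMPLIEScpathbdd}. This limits the equivalence classes of $\sim$ on $G$ to at most $N$.

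For any $(x,y), (z,w)\in G$ and any path $(x_i,y_i)_{i=1}^m$ in $G$ from $(x,y)$ to $(z,w)$, I would bound the sum $\sum_{i=1}^{m-1}(c(x_i,y_i)-c(x_{i+1},y_i))$ by decomposing it according to the equivalence classes visited. If any edge has infinite cost, the sum is $-\infty$ and the bound is trivial. Otherwise, since between two distinct classes graph edges can only go in one direction (else the classes would merge), the path visits each class at most once, giving at most $N-1$ inter-class transition steps; each such transition contributes $c(x_k,y_k)-c(x_{k+1},y_k)\le T+L$, using $c(x_k,y_k)\le T$ and $c(x_{k+1},y_k)\ge -L$. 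Within each class visited, the corresponding intermediate segment is bounded via Proposition~\ref{prop:1equivClassIMPLIEScpathbdd}. Summing all contributions yields the required finite $M((x,y),(z,w))$.

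The main obstacle I foresee is controlling the within-class contributions uniformly: Proposition~\ref{prop:1equivClassIMPLIEScpathbdd} produces a bound depending on the specific entry and exit points of each class-visit, and these vary with the path. To handle this I would fix a representative in each equivalence class, re-express the within-class bound as a telescoping quantity through this representative, and absorb the resulting discrepancies into the global constants $T$ and $L$ using $c$-cyclic monotonicity applied to auxiliary cycles through the representatives.
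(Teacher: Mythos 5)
Your architecture coincides with the paper's: cover $\overline{G}$ by finitely many sets of diameter $<\delta$ on which the cross points remain in $\{c<\infty\}$, deduce that $\sim$ has at most $N$ classes; split an arbitrary path into within-class segments separated by at most $N-1$ transition edges; bound each transition by $T+L$ using $c\le T$ on $G$ and the lower bound for $c$ on the compact set $\overline{P_XG}\times\overline{P_YG}$. (Both you and the paper quietly use that a bounded set has compact closure, which in a general separable metric space it need not; you at least flag this. Also, your justification that each class is visited only once should refer to directed \emph{paths} between classes rather than single edges, but the conclusion is right.)

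The genuine gap is precisely the one you name in your last paragraph, and your proposed repair does not yet close it. Let $F(p,q)$ denote the supremum of total costs of paths from $p$ to $q$ within a class $[v]$; the argument preceding Proposition \ref{prop:1equivClassIMPLIEScpathbdd} gives $F(p,q)<\infty$ for each fixed pair, but you need $\sup_{p,q\in[v]}F(p,q)<\infty$ since the entry and exit points of each class-visit depend on the path. ``Telescoping through a representative $r$'' goes the wrong way: concatenating paths only yields $F(p,q)\ge F(p,r)+F(r,q)$, a \emph{lower} bound. To salvage the idea one must first flip the inequality via $c$-cyclic monotonicity, $F(p,q)\le -F(q,p)$, and then produce a lower bound on $F(q,p)$ that is \emph{uniform} in both $q$ and $p$; this can be done by hopping in a single graph step from $q$ to the fixed center $g_k$ of the covering set $V_k$ containing it (the edge exists by the covering construction and its cost is at least $-(T+L+1)$), doing the same at the $p$ end, and invoking the finitely many fixed values $F(g_k,g_{k'})$ --- but none of this is in your write-up, and without the flip and the two hops the sketch as stated does not produce an upper bound. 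The paper instead shows that $F$ is uniformly continuous on $[v]\times[v]$ (prepending and appending perturbed endpoints to any path changes its cost by at most the modulus of continuity of $c$ on $\overline{G}$), whence $F$ is bounded on the totally bounded set $[v]\times[v]$. One of these two arguments must actually be carried out for the proof to be complete.
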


\begin{proof} 
By the argument given
at the beginning of this section,
we only need 
 to address pairs which lie in different equivalence classes of the relation $\sim$, that is, show that for such pairs a bound on the total cost of a path between them exists.

We first observe that under the assumptions we have made, there are only finitely many equivalence classes for $\sim$. Indeed, let $S_T =\{ (x,y) : c(x,y) \le T\}$, then by continuity of $c$ on the compact set $\overline{G}\subset S_T$,   we can find some $\delta>0$ such that if $|(z,w)- (x,y)|<\delta$ and $(x,y), (z,w)\in G$  then $\max(c(x,w), c(z,y)) <T+1$. In particular, any two points in $G$ whose distance is less than $\delta$ belong to the same equivalence class. Using compactness of $G$ again, we may cover it with a finite number of $\delta$-balls, so there can be no more than a finite number of different equivalence classes. Denote the number of equivalence classes of $\sim$ by $k_0\in {\mathbb  N}$.

	Next, we claim that for each equivalence class ( denoted by $[v]$) there exists a bound $M = M([v])$ (depending only on the equivalence class) 	
 such that for any two points $(x_s, y_s)$ and $(x_f, y_f)$ in $[v]$, the total cost of any path between then is bounded by $M$.  
	 To this end define the function $F: [v]  \times [v] \rightarrow \RR$ to be the supremum over the total cost on any path from the first given point to the second one, namely let  $F((x_s,y_s),(x_f,y_f))$ be given by  
\[	 \sup \left\lbrace c(x_s,y_s)- c(x_1,y_s)+ \sum_{k=1}^{m-1} \left( c(x_k,y_k)-c(x_{k+1},y_k) \right) + c(x_m,y_m)-c(x_f,y_m) \right\rbrace \] where the supremum runs over all $m  {\in \N}$ and any $(x_i,y_i)_{i=1}^m \in G$.
We have already shown (using $c$-cyclic monotonicity  {and the definition of the relation $\sim$}) that $F$ is finite. To see that it is bounded, it suffices to show that $F$ is uniformly continuous (since the domain is bounded as well).

	 Let  $(x_s, y_s),(x_f, y_f)\in [v]$. Given some path $(x_i, y_i)_{i=1}^m$ joining $(x_s',y_s')$ and $(x_f',y_f')$, we may add to it the two points $(x_s,y_s)$ and $(x_f,y_f)$ as the first and last points, getting a new path between $(x_s,y_s)$ and $(x_f,y_f)$. We thus see that 
	   \begin{eqnarray*}
	   F ((x_s',y_s'),(x_f',y_f')) + c(x_s, y_s) - c(x_s', y_s) + c(x_f', y_f') - c(x_f, y_f')  \le    F((x_s,y_s),(x_f,y_f)) .  \end{eqnarray*} 
	   However, as   $c$   is continuous  on the compact set $\overline{G}\subset S_T$,   
	   it is uniformly continuous, and for any $\eps>0$ we may pick  $\delta = \delta(\eps)$  such that if 
	   $|(x_s, y_s) - (x_s', y_s')|<\delta$ and  $|(x_f, y_f) - (x_f', y_f')|<\delta$ then 
	    $\left|c(x_s, y_s) - c(x_s', y_s)\right|<\eps/2 $  and $\left| c(x_f', y_f') - c(x_f, y_f')\right|<\eps/2$ so that we get 
	  \begin{eqnarray*}
	 	F((x_s',y_s'),(x_f',y_f'))  - F((x_s ,y_s ),(x_f ,y_f ))\le \eps.  \end{eqnarray*} 
 	By symmetry  {of $F$ in its arguments}, we get that $F$ is indeed uniformly continuous on $[v]\times [v]$, and in particular bounded.  {Denote this bound by $M([v])$.}

	 Finally,  {by the definition of our equivalence relation,} any path joining two points in $G$  can be split into  {at most} $k_0$ paths, each one within one of the equivalence classes, and at most $ k_0-1$ extra steps, each joining  two equivalence classes.
	
 To bound each of the ``single steps'' joining two different equivalence classes, say joining $(z,w) \in [v_1]$ and $(z',w')\in [v_2]$, notice first that the cost $c$ is bounded from below on $\overline{P_XG\times P_YG}$ by continuity. Therefore,  
  \[ c(z,w) - c(z',w) \le T - \inf \{c(x,w): (x,y)\in G, (z,w)\in G\} = : M_2. \]
To sum up, denoting the $k_0$ equivalence classes by $([v_i])_{i=1}^{k_0}$, the total cost for any path in $G\subset S_T$ is  bounded from above by $\sum_{i=1}^{k_0}M([v_i]) +(k_0-1)M_2$. As a result we have a uniform bound for any path with any beginning and end point in $G$, that is, $G$ as a whole is $c$-path-bounded (in fact, uniformly, which is a much stronger statement).	
\end{proof}

Summing up, we have seen that under additional geometric or topological conditions, $c$-cyclic monotonicity is in fact enough, and does imply $c$-path-boundedness (and in particular, the existence of a $c$-potential). For example, in the results of \cite{gangbo-oliker}, where the assumptions on the cost are that it is continuous, and infinite only on the diagonal, it is easy to check that a compact $c$-cyclically monotone set satisfies the conditions in Proposition \ref{prop:T-bdd}.

\bibliographystyle{apalike}
\bibliography{ref}

\smallskip \noindent
School of Mathematical Sciences, Tel Aviv University, Tel Aviv 69978, Israel  
\smallskip \noindent

{\it e-mail}: shiri@tauex.tau.ac.il\\
{\it e-mail}: shaysadovsky@mail.tau.ac.il\\
{\it e-mail}: kasiawycz@outlook.com\\

\end{document}